\tikzstyle{discarding}=[fill=white, draw=black, shape=circle, style=upground]
\tikzstyle{smalldiscarding}=[fill=white, draw=black, style=upground, scale=0.6]
\tikzstyle{backdiscard}=[fill=white, draw=black, shape=circle, style=downground, scale=0.5]
\tikzstyle{smallbackdiscard}=[fill=white, draw=black, shape=circle, style=downground, scale=0.5]
\tikzstyle{state}=[fill=white, draw=black, style=triang, tikzit shape=rectangle]
\tikzstyle{state_small}=[fill=white, draw=black, style={triang_small}, tikzit shape=rectangle]
\tikzstyle{state_lesspad}=[fill=white, draw=black, style={triang_lesssep}, tikzit shape=rectangle]
\tikzstyle{kstate}=[fill=white, draw=black, style=kpoint, tikzit shape=rectangle]
\tikzstyle{kstateconj}=[fill=white, draw=black, style=kpoint conjugate, tikzit shape=rectangle]
\tikzstyle{kstateBIG}=[fill=white, draw=black, style=big kpoint, tikzit shape=rectangle]
\tikzstyle{effect}=[fill=white, draw=black, style=triangdag]
\tikzstyle{effect_small}=[fill=white, draw=black, style={triangdag_small}]
\tikzstyle{keffect}=[fill=white, draw=black, style=kpoint adjoint]
\tikzstyle{keffectconj}=[fill=white, draw=black, style=kpoint transpose]
\tikzstyle{morphdag}=[style=mapdag]
\tikzstyle{morph}=[style=map]
\tikzstyle{morphtrans}=[style=maptrans]
\tikzstyle{morphconj}=[style=mapconj]
\tikzstyle{CPMmorph}=[style=dmap]
\tikzstyle{CPMmorphconj}=[style=dmapconj]
\tikzstyle{CPMmorphdag}=[style=dmapdag]
\tikzstyle{CPMmorphtrans}=[style=dmaptrans]
\tikzstyle{CPMstate}=[fill=white, draw=black, style=triang, doubled]
\tikzstyle{CPMstateBIG}=[fill=white, draw=black, style={triang_lesssep}, doubled]
\tikzstyle{CPMkstate}=[fill=white, draw=black, style=kpoint, tikzit shape=rectangle, doubled]
\tikzstyle{CPMkstateconj}=[fill=white, draw=black, style=kpoint conjugate, tikzit shape=rectangle, doubled]
\tikzstyle{CPMkstateBIG}=[fill=white, draw=black, style=big kpoint, tikzit shape=rectangle, doubled]
\tikzstyle{CPMkeffect}=[fill=white, draw=black, style=kpoint adjoint, doubled]
\tikzstyle{CPMkeffectconj}=[fill=white, draw=black, style=kpoint transpose, doubled]
\tikzstyle{UHfB}=[fill=white, draw=black, style=triangdag, doubled, inner sep=-2pt]
\tikzstyle{leak}=[style=tinypoint, regular polygon rotate=-90]
\tikzstyle{leakfill}=[style=tinypoint, regular polygon rotate=-90, fill=black]
\tikzstyle{Z}=[style=dot, fill=green]
\tikzstyle{X}=[style=dot, fill=red]
\tikzstyle{black_dot}=[style=dot, fill=black]
\tikzstyle{white_dot}=[style=dot, fill=white]
\tikzstyle{qblack_dot}=[style=ddot, fill=black]
\tikzstyle{qwhite_dot}=[style=ddot, fill=white]
\tikzstyle{whitephase}=[style=wphase dot, fill=white]
\tikzstyle{qredphase}=[style=phase dot, fill=red]
\tikzstyle{qgreenphase}=[style=phase dot, fill=green]
\tikzstyle{CPMbox}=[style=hadamard, doubled]
\tikzstyle{box}=[style=hadamard]
\tikzstyle{bigbox}=[style=hadamard, minimum height=6mm, minimum width=8mm]
\tikzstyle{widebigbox}=[style=hadamard, minimum height=6mm, minimum width=18mm]
\tikzstyle{WIDEBOI}=[style=hadamard, minimum height=6mm, minimum width=40mm]
\tikzstyle{WIDEstate}=[fill=white, draw=black, style=triang, tikzit shape=rectangle, minimum width=40mm]
\tikzstyle{bigboxCPM}=[style=hadamard, minimum height=6mm, minimum width=8mm, doubled]
\tikzstyle{antipode}=[style=anti]
\tikzstyle{WIDEcutstate}=[style={state_doublecut}, text depth=2mm, minimum width=8mm]
\tikzstyle{WIDERcutstate}=[style={state_doublecut}, text depth=0mm, minimum width=36mm]
\tikzstyle{WIDEcutstateCPM}=[style={state_doublecut}, text depth=2mm, minimum width=8mm, doubled]
\tikzstyle{WIDEcuteffect}=[style={effect_doublecut}, text depth=2mm, minimum width=8mm]
\tikzstyle{dottededge}=[-, dotted]
\tikzstyle{double edge}=[-, style=doubled, draw=black, tikzit draw={rgb,255: red,191; green,0; blue,64}]
\tikzstyle{red_shade}=[-, draw=none, tikzit draw={rgb,255: red,132; green,175; blue,175}, fill={rgb,255: red,255; green,67; blue,95}, opacity=0.5]
\tikzstyle{green_shade}=[-, draw=none, tikzit draw={rgb,255: red,132; green,175; blue,175}, fill={rgb,255: red,111; green,255; blue,145}, opacity=0.5]
\tikzstyle{green_shade_border}=[-, tikzit draw={rgb,255: red,132; green,175; blue,175}, fill={rgb,255: red,111; green,255; blue,145}, opacity=0.5]
\tikzstyle{light_green_shade_border}=[-, draw=none,tikzit draw={rgb,255: red,132; green,175; blue,175}, fill={rgb,255: red,111; green,215; blue,145}, opacity=0.5]
\tikzstyle{blue_shade}=[-, draw=none, tikzit draw={rgb,255: red,132; green,175; blue,175}, fill={rgb,255: red,106; green,171; blue,255}, opacity=0.5]
\tikzstyle{orange_shade}=[-, draw=none, tikzit draw={rgb,255: red,132; green,175; blue,175}, fill={rgb,255: red,255; green,158; blue,78}]
\tikzstyle{purple_shade}=[-, draw=none, tikzit draw={rgb,255: red,132; green,175; blue,175}, fill={rgb,255: red,203; green,71; blue,255}, opacity=0.5]
\tikzstyle{yellow_shade}=[-, draw=none, tikzit draw={rgb,255: red,132; green,175; blue,175}, fill={rgb,255: red,255; green,250; blue,101}]
\tikzstyle{greenpath}=[-, draw={rgb,255: red,111; green,255; blue,145}]
\tikzstyle{blue path}=[-, draw={rgb,255: red,106; green,171; blue,255}]
\tikzstyle{brace edge}=[-, decorate, decoration={{brace,amplitude=1mm,raise=-1mm}}]
\newcommand{\morph}[1]{\xrightarrow{#1}}
\newcommand{\lmorph}[1]{\xleftarrow{#1}}
\newcommand{\pmorph}{\relbar\joinrel\mapstochar\joinrel\rightarrow}
\newcommand{\cat}[1]{\mathcal{#1}}
\newcommand{\opcat}[1]{{#1}^\textrm{op}}
\newcommand{\set}{\mathsf{Set}}
\newcommand{\reals}{\mathbb{R}}
\newcommand{\Space}{\mathsf{Space}}
\newcommand{\Slice}{\mathsf{Slice}}
\newcommand{\Cat}{\mathsf{Cat}}
\newcommand{\Prof}{\mathsf{Prof}}
\newcommand{\pcat}{\mathsf{PCat}}
\newcommand{\funny}{\mathrel{\square}}
\newcommand{\oor}{\varovee}
\newcommand{\oand}{\varowedge}
\DeclareFontFamily{U}{min}{}
\DeclareFontShape{U}{min}{m}{n}{<-> udmj30}{}
\newcommand\yo{\!\text{\usefont{U}{min}{m}{n}\symbol{'210}}\!}
\theoremstyle{definition}
\newtheorem{defn}{Definition}
\theoremstyle{plain}
\newtheorem{prop}{Proposition}
\theoremstyle{plain}
\newtheorem{lem}{Lemma}
\theoremstyle{plain}
\newtheorem{thm}{Theorem}
\theoremstyle{plain}
\theoremstyle{remark}
\newtheorem*{remark}{Remark}
\theoremstyle{definition}
\def\yo{\ensuremath{\mathbb Y}\xspace}
\title{On the Pre- and Promonoidal Structure of Spacetime}
\author{James Hefford
\institute{University of Oxford, UK}
\email{james.hefford@cs.ox.ac.uk}
\and
Aleks Kissinger
\institute{University of Oxford, UK}
\email{aleks.kissinger@cs.ox.ac.uk}
}
\begin{document}

\maketitle

\begin{abstract}
The notion of a joint system, as captured by the monoidal (a.k.a.\ tensor) product, is fundamental to the compositional, process-theoretic approach to physical theories. Promonoidal categories generalise monoidal categories by replacing the functors normally used to form joint systems with profunctors. Intuitively, this allows the formation of joint systems which may not always give a system again, but instead a generalised system given by a presheaf. This extra freedom gives a new, richer notion of joint systems that can be applied to categorical formulations of spacetime. Whereas previous formulations have relied on partial monoidal structure that is only defined on pairs of independent (i.e.\ spacelike separated) systems, here we give a concrete formulation of spacetime where the notion of a joint system is defined for any pair of systems as a presheaf. The representable presheaves correspond precisely to those actual systems that arise from combining spacelike systems, whereas more general presheaves correspond to virtual systems which inherit some of the logical/compositional properties of their ``actual'' counterparts. We show that there are two ways of doing this, corresponding roughly to relativistic versions of conjunction and disjunction. The former endows the category of spacetime slices in a Lorentzian manifold with a promonoidal structure, whereas the latter augments this structure with an (even more) generalised way to combine systems that fails the interchange law.
\end{abstract}

\section{Introduction}

Categorical approaches to the modelling of structures of spacetime have become increasingly rich topics of study leading to both the development of new mathematics and a greater understanding of the underlying structures of our theories of physics.
Nevertheless, the precise categorical structures that should be present in a model of spacetime are far from settled.
Monoidal structure is a common requirement, being a key part of Categorical Quantum Mechanics \cite{coecke_cqm} and of many approaches to Topological Quantum Field Theory \cite{baez_tqft}.
The key physical argument for the assumption of monoidal structure is simple: if one has a pair of systems, then one should be able to put them together and consider the composite as a new system.

While this assumption may be ideal in abstract process theories, say where one wishes to model arbitrary qubits as in the ZX-calculus \cite{coecke_zx}, when we turn our attention to \textit{decompositional} approaches to modelling physical systems \cite{coecke_causalcats}, it becomes apparent that the universe does not behave in a fully monoidal fashion.
Rather than starting with a collection of existent systems and presupposing that it is possible to join them together arbitrarily, we start with a global system - the whole of spacetime - and carve out systems with the hope of recovering some fragment of compositional structure.

In such a framework, the tensor becomes problematic, for instance, if we pick a particular system, say a specified qubit $A$, it is clearly not possible to form the product $A\otimes A$ in the usual sense, for what would it mean to consider the composite of a system with itself?
Indeed, the fundamental issue here is trying to tensor two objects that are not independent and that can influence each other in non-trivial ways; we would also have issues taking the tensor of timelike separated systems, or of mixed systems whose environments are not causally disjoint.

There are two main obstructions to hoping for a total tensor product on a category modelling spacetime regions.
Firstly, one would like the objects of the category to have a physical interpretation as systems existing in reality.
It can often be the case though that no such physical system exists for the composite of physically reasonable systems.
For instance, if we take the objects of our category to represent slices of spacetime - closed spacelike subsets of a Lorentzian manifold - when we try to join two slices together they will not form another slice unless the original slices were causally separated.

Secondly, functoriality can fail and one often finds that the interchange law does not hold:
\begin{equation}\label{eq:interchange}
  (g\otimes 1)(1\otimes f) \neq (1\otimes f)(g\otimes 1)
\end{equation}
while functoriality in each side of the tensor still holds $(1\otimes f')(1\otimes f) = (1\otimes f'f)$ and $(g'\otimes 1)(g\otimes 1) = (g'g\otimes 1)$.
This occurs because the systems involved in the tensor may not be independent - they might causally influence each other or possess a shared environment.
Thus the casual ordering of $f$ and $g$ is vitally important.

One possible route forwards could be to define the tensor only partially.
It was noted in \cite{coecke_causalcats} that one can recover a partial monoidal structure where the tensor product is only defined on regions of spacetime that are causally separated.
A group theoretic approach was taken in \cite{gogioso_church} where the resulting category has partial monoidal structure defined only on compatible systems, which requires both the causal separation of systems and also their coupled environments.
Another approach starting with a poset modelling the causal relationships of spacetime events \cite{gogioso_functorial}, resulted in partial monoidality, again only defined on causally separated systems.
Partial monoidality, due to similar causality obstructions has appeared in a proposal for modelling the Wolfram model \cite{gorard}.

Outside of applications to physics, partially monoidal categories have made an appearance in \cite{balco_thesis,balco_partiallymonoidal,balco_nominal} where it was noted that the category of finite subsets of some given set $N$ has a partially monoidal structure given by the union of disjoint sets.
The authors develop a string diagrammatic language dubbed \textit{nominal string diagrams}, where wires are labelled with elements from the fixed set $N$.
There are similarities between this and the present work - our decompositional approach to physics also has a fixed global set from which we label all systems (a manifold $\mathcal{M}$) and the partial monoidal structure of spacetime slices developed here is also given by unions and intersections of sets.
On the other hand, there is a major point of difference between our approach and that of Balco {et. al.}.
While they made the partial monoidal structure total by working with categories internal to a monoidal category, we aim to totalise the partial monoidal structure by working with the presheaves of our category.

We propose the usage of weakenings of monoidal categories in the form of \textit{pro}monoidal \cite{day} and \textit{pre}monoidal \cite{power_premonoidal} categories to model causal curves in spacetime.
Premonoidal categories are like monoidal ones but dropping the interchange law \eqref{eq:interchange}.
They were developed for modelling computational semantics with side-effects and have been used previously to model spacetime particularly in relation to Algebraic Quantum Field Theories \cite{comeau,blute}, where it was argued one could use them to model the Einstein causality condition.
Here, we reinforce their point and argue that the lack of bifunctoriality seems to be fundamental in a decompositional approach to spacetime.
Premonoidal categories have also appeared elsewhere in applications to petri nets \cite{baez_petri}.

Promonoidal categories are loosely like monoidal categories into the presheaf category.
To our knowledge they have not been directly used in a model of spacetime before.
Here, we use them to extend the partial monoidality of spacetime to a total tensor by allowing us to assign useful mathematical objects to otherwise physically problematic ones.
For instance, the union of two slices of spacetime is another region of the manifold but not necessarily a slice, thus lacking physical interpretation.
We can assign the union a presheaf, with these presheaves being representable whenever the union is another slice.
The non-representable presheaves can be thought to act like ``virtual systems,'' they carry useful information but are not physically meaningful.

In sections \ref{sec:promonoidal} and \ref{sec:premonoidal} we recap promonoidal and premonoidal categories respectively.
In section \ref{sec:slices} we introduce toy categories $\Slice$ and $\Space$ of causal curves in spacetime before showing in section \ref{sec:slice_promonoidal} that $\Slice$ is a promonoidal category under the operation of taking intersections of sets of causal curves.
In section \ref{sec:slice_union} we discuss the operation of taking unions of sets of causal curves and demonstrate that this gives a premonoidal structure on $\Space$ while $\Slice$ combines the structures of promonoidal and premonoidal categories.
Under either of the tensor-like structures on $\Slice$ we prove that the presheaves assigned to the tensors are representable if and only if the slices are jointly spacelike and in doing so show that we recover a type of partial tensor product on causally separated regions.
In the final section \ref{sec:logic} we give the beginnings of a physical interpretation to the operations on $\Slice$ as capturing a kind of logical conjunction and disjunction of predicates about particles in spacetime.

\section{Preliminaries}
\subsection{Promonoidal Categories}\label{sec:promonoidal}
Before we introduce the formal definition of a promonoidal category let us comment on the intuition we hope to capture.

In a monoidal category $\cat{C}$, the tensor product of two objects of $\cat{C}$ returns another object in $\cat{C}$, that is, it is a functor $\cat{C}\times\cat{C}\morph{}\cat{C}$.
Returning to the example of a category of spacetime slices, it is problematic to assign an object of $\cat{C}$ to the tensor product whenever the regions of spacetime are timelike separated.
The best we could hope for would be a \textit{partial} monoidal structure which is only defined when regions are spacelike separated.
Perhaps it might be possible though to assign the tensor of timelike separated regions to be a different sort of object, one that lives outside the category $\cat{C}$?
What is a sensible choice of such ``external'' objects and how can we ensure that they work together compatibly such that we might describe the overall structure as something like a tensor product?

We will investigate the usage of promonoidal categories to deal with the aforementioned issues.
Rather than assign an object of $\cat{C}$ to the tensor product, we assign it a \textit{presheaf}: a functor $\opcat{\cat{C}}\morph{}\set$.
Presheaves are nicely-behaved mathematical objects: they form a category $[\opcat{\cat{C}},\set]$ where the morphisms are natural transformations between the presheaves, and the Yoneda lemma provides a way of embedding of $\cat{C}$ fully and faithfully into its presheaves $\yo:\cat{C}\morph{}[\opcat{\cat{C}},\set]$.
The image of this functor consists of the \textit{representable} presheaves which are of the form $\yo_A\cong\cat{C}(-,A)$ for some object $A$ of $\cat{C}$.

By working with promonoidal categories we are able to assign the tensor a presheaf $(A\otimes B)(-):\opcat{\cat{C}}\morph{}\set$, and in doing so, work with otherwise undefinable tensor products.
Since $\cat{C}$ embeds into its presheaves, we do not lose any ability to still assign some tensor products to essentially be objects of $\cat{C}$.
Indeed, when the tensor product of objects of $\cat{C}$ is a representable presheaf, $(A\otimes B)(-) \cong \cat{C}(-,C)$ we can identify $A\otimes B$ with $C$ under the Yoneda embedding.
In this way, promonoidal categories are like partially monoidal ones - when the presheaf is representable we essentially have an object of $\cat{C}$ again - but rather than the tensor being undefined elsewhere we can still assign otherwise ``untensorable'' objects a non-representable presheaf.
For a more detailed discussion of the connections between partially monoidal and promonoidal categories see appendix \ref{sec:partiallymonoidal}.

Now, let us start with some core definitions concerning profunctors and their composition.
A more comprehensive study can be found in e.g. \cite{loregian_coend}.

\begin{defn}[Profunctor]
  A profunctor $P:\cat{C}\pmorph\cat{D}$ is a functor $\opcat{\cat{D}}\times\cat{C}\morph{}\set$.
\end{defn}

Profunctors generalise functors in a similar way to how relations generalise functions between sets - profunctors are like ``relations between categories,''
(note that a relation $A\sim B$ is equivalently a function out of the cartesian product of the sets $A\times B\morph{} \{0,1\}$).
We will often use a shorthand Einstein-style notation for profunctors writing $P(d,c) = P^d_c$, with subscripts for covariant variables and superscripts for contravariant ones.

\begin{defn}[Cowedge, Coend]
  Given a profunctor $P:\opcat{\cat{C}}\times\cat{C}\morph{}\set$, a cowedge $(d,w)$ for $P$ is an object $d$ of $\set$ together with arrows $w_c:P(c,c)\morph{}d$ making the following diagram commute for all $f$:
  \begin{equation*}
    \begin{tikzcd}
      d & P(c,c) \ar[l,"w_c"']\\
      P(c',c') \ar[u,"w_{c'}"] & P(c',c) \ar[l,"P{(c',f)}"] \ar[u,"P{(f,c)}"']
    \end{tikzcd}
  \end{equation*}
  The coend of $P$ is a universal cowedge $(\int^c P(c,c),\text{copr})$: this is the cowedge such that all other cowedges factorise uniquely through it:
  \begin{equation*}
    \begin{tikzcd}
      d \\
      & \int^c P(c,c)  \ar[ul, dashed]  & P(c,c) \ar[l,"\text{copr}_c"'] \ar[ull, "w_{c}", bend right, swap]\\
      & P(c',c') \ar[u,"\text{copr}_{c'}"] \ar[uul, "w_{c'}", bend left] & P(c',c) \ar[l,"P{(c',f)}"] \ar[u,"P{(f,c)}"']
    \end{tikzcd}
  \end{equation*}
\end{defn}

Coends have a series of nice properties which help to justify the use of an integral symbol to represent them.
Firstly, they satisfy a Fubini-style law allowing us to commute coends:
\begin{equation*}
  \int^c\int^d P(c,c,d,d) \cong \int^{(c,d)\in\cat{C}\times\cat{D}} P(c,c,d,d) \cong \int^d\int^c P(c,c,d,d)
\end{equation*}
Secondly, the Yoneda lemma implies the following identities, sometimes known as the ninja Yoneda lemma:
\begin{equation}\label{eq:ninja}
  \int^c \cat{C}(-,c)\times F(c) \cong F(-) \hspace{2cm} \int^c G(c)\times\cat{C}(c,-) \cong G(-)
\end{equation}
for any functors $F:\opcat{\cat{C}}\morph{}\set$ and $G:\cat{C}\morph{}\set$.
So the hom-profunctor behaves ``like a Dirac-delta function''.

\begin{defn}[Composition of Profunctors]
  Given profunctors $P:\cat{C}\pmorph\cat{D}$ and $Q:\cat{D}\pmorph\cat{E}$, their composite is given by taking the coend
  \begin{equation*}
    (Q\circ P)(-,=) = \int^d Q(-,d)\times P(d,=) : \cat{C} \pmorph \cat{E}
  \end{equation*}
\end{defn}

This coend can be characterised as the coequaliser:
\begin{equation*}
  \bigsqcup_{f:d\morph{}d'} Q(-,d)\times P(d',=) \ \text{\Large$\rightrightarrows$} \ \bigsqcup_{d} Q(-,d)\times P(d,=) \longrightarrow \int^d Q(-,d)\times P(d,=)
\end{equation*}
where the coequalised pair ``act by $f$ on the left and right under the profunctor''.
We can think of the resulting quotient set $(Q\circ P)(e,c)$ as equivalence classes of pairs $(q,p)$ where $q\in Q(e,d)$ and $p\in P(d,c)$ under the relations $(Q(e,f)(q),p) \sim (q,P(f,c)(p))$.
We will refer to these as the ``sliding'' relations since it is as though we can slide $f$ from one side to the other (up to changing $Q$ and $P$).

The composition of profunctors will be written as $(Q\circ P)(e,c) = Q^e_d P^d_c$ in the Einstein notation, where instead of the summation convention we have a ``coend convention'' - repeated indices, once covariant and once contravariant, are to be coend-ed out.
In this way, one also sees the similarity between profunctor composition and matrix multiplication.

Categories, profunctors and natural transformations form a monoidal bicategory $\Prof$ where the monoidal product acts as $\cat{C}\times\cat{D}$ on 0-cells and as $(P\times Q)(c,d,e,f) = P(c,d)\times Q(e,f)$ on 1-cells.
The hom-profunctors play a special role in $\Prof$: they are the identity 1-cells by the ninja Yoneda lemma \eqref{eq:ninja}.

We are now in a position to define promonoidal categories.

\begin{defn}[Promonoidal Category \cite{day,day_thesis}]
  A category $\cat{C}$ is promonoidal if it is equipped with
  \begin{itemize}
    \item a tensor product profunctor $\otimes:\cat{C}\times\cat{C}\pmorph\cat{C}$
    \item a unit profunctor $I:1\pmorph\cat{C}$, i.e. a presheaf $I:\opcat{\cat{C}}\morph{}\set$
  \end{itemize}
  together with natural isomorphisms $\otimes(\otimes\times 1) \overset{\alpha}{\cong} \otimes(1\times \otimes)$ and $\otimes(\otimes\times I) \overset{\rho}{\cong} 1 \overset{\lambda}{\cong} \otimes(I\times \otimes)$ subject to the triangle and pentagon coherence conditions similar to a monoidal category.
  A promonoidal category is \textit{strict} when the coherence isomorphisms are identities.
  A promonoidal category is \textit{symmetric} when there is a natural isomorphism $\sigma_{ABC}:\otimes^A_{BC}\morph{}\otimes^A_{CB}$ satisfying the hexagon equation.
\end{defn}

\begin{remark}
  A very concise definition of a promonoidal category $\cat{C}$ is as a pseudomonoid in $\mathsf{Prof}$.
\end{remark}

There are many similarities between the definitions of promonoidal and monoidal categories.
One can think of promonoidal categories as what we get when we ``upgrade'' the functors of a monoidal category to profunctors.
This really is an upgrade since every functor induces two profunctors by taking its covariant or contravariant Yoneda embeddings.
Furthermore, by the following result we can consider promonoidal categories as strictly more general than monoidal ones.
\begin{thm}[\cite{day,day_thesis}]\label{prop:monoidal_promonoidal}
  All monoidal categories $(\cat{C},\boxtimes,J)$ are promonoidal categories where we define the tensor profunctor as $(A\otimes B)(-) := \cat{C}(-,A\boxtimes B)$ and the unit profunctor as $I(-):= \cat{C}(-,J)$.
  Conversely, a promonoidal category whose tensor and unit are everywhere representable is a monoidal category.
\end{thm}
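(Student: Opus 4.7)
The plan is to handle the two directions separately, both driven by the ninja Yoneda lemma \eqref{eq:ninja} together with full faithfulness of the Yoneda embedding $\yo$.

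For the forward direction, given a monoidal $(\cat{C},\boxtimes,J)$, I would set $(A\otimes B)(-) := \cat{C}(-,A\boxtimes B)$ and $I(-):=\cat{C}(-,J)$; these are plainly profunctors, functorial in all arguments by pre- and post-composition with $\boxtimes$. The substantive step is to compute the iterated profunctor composites that enter the associator and unitors. For the left triple tensor one has
\begin{equation*}
\int^{X} \cat{C}(D,X\boxtimes C) \times \cat{C}(X,A\boxtimes B) \;\cong\; \cat{C}(D,(A\boxtimes B)\boxtimes C),
\end{equation*}
by a single application of \eqref{eq:ninja}, and symmetrically $\int^{Y} \cat{C}(D,A\boxtimes Y)\times\cat{C}(Y,B\boxtimes C) \cong \cat{C}(D,A\boxtimes(B\boxtimes C))$. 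Postcomposing with the monoidal associator $\alpha_{A,B,C}$ thus supplies the promonoidal associator; analogous coend collapses, using the monoidal $\lambda$ and $\rho$, provide the unitors. Pentagon and triangle then descend from their monoidal counterparts because each face of the relevant coherence diagrams, after collapsing all coends, becomes the hom-image of the corresponding face in $\cat{C}$, and $\yo$ is faithful.

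For the converse, assume every $(A\otimes B)(-)$ and $I(-)$ is representable, and pick representing objects $A\boxtimes B$ and $J$. Since $\yo$ is fully faithful, the object assignment $(A,B)\mapsto A\boxtimes B$ extends uniquely to a bifunctor $\boxtimes:\cat{C}\times\cat{C}\morph{}\cat{C}$ making the chosen representability isomorphisms natural in $A$ and $B$. Running the coend collapses above in reverse shows that the promonoidal $\alpha$, $\lambda$, $\rho$ are natural isomorphisms between representable presheaves such as $\cat{C}(-,(A\boxtimes B)\boxtimes C)\cong\cat{C}(-,A\boxtimes(B\boxtimes C))$; Yoneda converts these into the required monoidal associator and unitors, and the pentagon and triangle conditions reflect back from the promonoidal ones along $\yo$.

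The main obstacle is bookkeeping: each side of each coherence diagram is a composite of profunctor composites, and one must verify that the ninja Yoneda collapses intertwine correctly with the naturality of the associator and unitors before identifying the two sides with the monoidal coherence morphisms. The saving grace is that every intermediate presheaf encountered in the calculation remains representable, so full faithfulness of $\yo$ allows the coherence bookkeeping to be transported into $\cat{C}$ rather than carried out at the level of presheaves. A minor side point to verify carefully in the converse is that the representing object chosen for each $(A\otimes B)(-)$ is in fact functorial in $A$ and $B$, but this is forced by naturality of the representability isomorphism in both its profunctor argument and its object arguments.
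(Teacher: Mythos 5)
The paper states this theorem as a cited result of Day and provides no proof of its own, so there is no in-paper argument to compare against. Your proposal is the standard proof—collapse the iterated profunctor composites with the ninja Yoneda lemma \eqref{eq:ninja} so that each side of every coherence diagram becomes a representable presheaf, then transport the associator, unitors, pentagon and triangle back and forth along the fully faithful Yoneda embedding—and it is correct; the caveats you flag (functoriality of the chosen representing objects via parameterised representability, and checking that the coend collapses absorb the bicategorical associativity constraints of $\Prof$) are precisely the bookkeeping that a complete write-up would need, and none of it presents an obstruction.
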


We will mostly think of the tensor product profunctor $\otimes:\opcat{\cat{C}}\times\cat{C}\times\cat{C}\morph{}\set$ in its curried form as a functor into presheaves, $\otimes:\cat{C}\times\cat{C}\morph{}[\opcat{\cat{C}},\set]$ and in an abuse of notation we freely switch between using $\otimes$ for the tensor product in its three different forms (as a profunctor, a functor into $\set$ and a functor into presheaves) so long as it is clear which we mean.

\subsection{Premonoidal Categories}\label{sec:premonoidal}
Alongside promonoidal categories, the other monoidal-like structures in this article are premonoidal categories.
Premonoidal categories are a weakening of monoidal categories to allow for situations when one can join objects together but each half of the tensor is only individually functorial, that is, while it is the case that $(g'\otimes 1)(g\otimes 1) = (g'g\otimes 1)$ and $(1\otimes f')(1\otimes f) = (1\otimes f'f)$ we have the following inequality:
\begin{equation*}
  \tikzfig{premonoidal}
\end{equation*}
These categories were originally introduced to model computational semantics with side-effects \cite{power_premonoidal} but we expect categories of causal curves to have similar structure.
If $f$ and $g$ act on slices which are timelike separated or have a non-trivial intersection, then their causal ordering can be vitally important; $f$ could change the state space in ways that later influence $g$ or vice-versa.
These ``hidden'' influences between maps can be seen to be somewhat akin to the side-effects in the computational semantics for which premonoidal categories were originally intended.

A premonoidal category has for each object $X$, a pair of functors $X\rtimes -$ and $-\ltimes X$ acting as the left and right parts of the tensor product, together with compatibility between their actions on objects.
More precisely:
\begin{itemize}
  \item for each pair of objects $X$ and $Y$ of $\cat{C}$ there is an assigned object $X\boxtimes Y$ of $\cat{C}$,
  \item for each object $X$ of $\cat{C}$, there is a functor $X\rtimes - :\cat{C}\morph{}\cat{C}$ acting on objects as $X\rtimes Y = X\boxtimes Y$,
  \item for each object $Y$ of $\cat{C}$, there is a functor $-\ltimes Y:\cat{C}\morph{}\cat{C}$ acting on objects as $X\ltimes Y = X\boxtimes Y$.
\end{itemize}

There is no compatibility condition between the left and right parts on morphisms, so in general it will be the case that $(f\ltimes Y')(X\rtimes g) \neq (Y\rtimes g)(f\ltimes X')$ for $f:X\morph{}Y$, $g:X'\morph{}Y'$.
Pairs of morphism for which such equalities hold, we can think of as acting like the normal tensor and can safely denote $f\otimes g$.
In particular, there is a special name for those morphisms which commute with all others:

\begin{defn}[Central Morphism \cite{power_premonoidal}]
  A morphism $f:X\morph{}Y$ is central if and only if for all $g:X'\morph{}Y'$, the following two diagrams commute:
  \begin{equation*}
    \begin{tikzcd}
      X\otimes X' \ar[r,"X\rtimes g"] \ar[d, "f\ltimes X'"'] & X\otimes Y' \ar[d,"f\ltimes Y'"] \\
      Y\otimes X' \ar[r,"Y\rtimes g"'] & Y\otimes Y'
    \end{tikzcd}
    \hspace{1cm}
    \begin{tikzcd}
      X'\otimes X \ar[r,"X'\rtimes f"] \ar[d, "g\ltimes X"'] & X'\otimes Y \ar[d,"g\ltimes Y"] \\
      Y'\otimes X \ar[r,"Y'\rtimes f"'] & Y'\otimes Y
    \end{tikzcd}
  \end{equation*}
\end{defn}

In addition to the above data, a premonoidal category needs associativity and unit natural isomorphisms which are central:

\begin{defn}[Premonoidal Category \cite{power_premonoidal}]
  A category $\cat{C}$ is premonoidal if it is equipped with left and right tensor functors $X\rtimes -$ and $-\ltimes Y$ for each $X$ and $Y$, such that they are compatible on objects, together with:
  \begin{itemize}
    \item a unit object $I$ with central isomorphisms $\lambda_X:X\otimes I\morph{}X$ and $\rho_X:I\otimes X\morph{}X$ for each $X$,
    \item a central isomorphism $\alpha_{XYZ}:(X\otimes Y)\otimes Z\morph{} X\otimes (Y\otimes Z)$ for each triple $X,Y$ and $Z$,
  \end{itemize}
  such that the triangle and pentagon equations hold and so that the naturality squares for $\alpha,\lambda$ and $\rho$ hold.
  A premonoidal category is \textit{strict} when the coherence isomorphisms are identities.
\end{defn}

It is possible to combine the left and right tensor functors $X\rtimes -$ and $-\ltimes Y$ into a single functor $\cat{C} \funny \cat{C}\morph{}\cat{C}$ from the \textit{funny tensor product} \cite{foltz}.
A concise definition of the funny tensor is as follows,

\begin{defn}[Funny tensor product \cite{weber}]
  The funny tensor product $\cat{C}\funny\cat{D}$ is given by the following pushout
  \begin{equation}\label{funnytensor}
    \begin{tikzcd}
      \cat{C}_0\times\cat{D}_0 \ar[r,"1\times i_\cat{D}",hookrightarrow] \ar[d, "i_\cat{C}\times 1"',hookrightarrow] & \cat{C}_0\times\cat{D} \ar[d] \\
      \cat{C}\times\cat{D}_0 \ar[r] & \cat{C}\funny\cat{D} \arrow[ul, phantom, "\ulcorner", very near start, xshift= 0.2cm, yshift= 0cm]
    \end{tikzcd}
  \end{equation}
  where $\cat{C}_0$ and $\cat{D}_0$ are the discrete categories of the objects of $\cat{C}$ and $\cat{D}$ respectively.
\end{defn}

Explicitly, the category $\cat{C}\funny\cat{D}$ has as objects pairs $(c,d)$ of an object $c$ of $\cat{C}$ and $d$ of $\cat{D}$.
The morphisms are generated by freely composing $(f;1):(c,d)\morph{}(c',d)$ where $f:c\morph{}c'$ in $\cat{C}$ and $(1;g):(c,d)\morph{}(c,d')$ where $g:d\morph{}d'$ in $\cat{D}$ with the rule that compositions exclusively in $\cat{C}$ or $\cat{D}$ may be contracted: $(f';1)(f;1)=(f'f;1)$ and $(1;g')(1;g) = (1;g'g)$ but $(f;1)(1;g)\neq(1;g)(f;1)$ and thus there is no sensible notion of ``$(f;g)$''.
There is a oplax monoidal functor $\cat{C}\funny\cat{D}\morph{}\cat{C}\times\cat{D}$ induced by the universal property of the pushout, which forces the interchange squares to commute.

\section{A Category of Spacetime Slices}\label{sec:slices}
The aim of the remainder of this article is to develop a toy category of spacetime slices and causal curves and then demonstrate that it exhibits both premonoidal and promonoidal structures.

\subsection{Spacetimes and Causal Curves}
From now on we fix a connected Lorentzian manifold $\mathcal{M}$ with metric $g$.
A tangent vector $X$ is said to be \textit{spacelike}, \textit{timelike} or \textit{null} if $g(X,X)>0, g(X,X)<0$ or $g(X,X)=0$, respectively.
$\mathcal{M}$ is said to be \textit{time-orientable} if it has a non-vanishing timelike vector field and the timelike tangent vectors at each point can be divided (in a continuous fashion) into two classes: a \textit{future-directed} and a \textit{past-directed} class.
We assume that $\mathcal{M}$ is time-orientable and fix a time-orientation.
The assumptions we make of our spacetime are fairly weak causality-wise, and are weaker than those of past- and future-distinguishability \cite{malament,kronheimer} (which was assumed by \cite{gogioso_functorial}) and certainly weaker than the existence of a Cauchy slice (equivalently global hyperbolicity) \cite{geroch}.
As a result we have not ruled out the existence of closed timelike curves in the spacetime.

A simple example of the kinds of manifolds we are interested in is Minkowski space $\reals^{n+1}$ equipped the metric $g(X,X)=|x|^2-t^2$ for $X=(t,x)$.
The timelike vectors are those $(t,x)$ where $t^2>|x|^2$, of which there are two classes $t>|x|$ and $t<-|x|$ consisting of vectors which point forwards and backwards in time, respectively; a timelike vector $(t,x)$ is future-directed when $t>0$ and past-directed when $t<0$.
There is no issue with restricting oneself to Minkowski space for the remainder of the article, but we note that the results hold in the fully general case.

A \textit{path} in $\mathcal{M}$ is a continuous map $\mu:\iota\morph{}\mathcal{M}$ where $\iota \subseteq \reals$ is a (possibly unbounded) real interval.
Such a path is \textit{smooth} if it is infinitely differentiable and \textit{regular} if its first derivative is non-vanishing.
A smooth regular path is \textit{causal} when the tangent vector is timelike or null at all points in the path and a causal path is \textit{future-directed} when the tangent at every point is future-directed.
For a point $x \in \mathcal M$, the set of all points $y \in \mathcal M$ with a future-directed path $x$ to $y$ is called the \textit{future light cone} of $x$, whereas the set of all points with a future-directed path from $y$ to $x$ is called the \textit{past light cone} of $x$.

Often it is more convenient to work with equivalence classes of paths, up to reparametrisation, i.e. $\mu \sim \mu'$ if and only if there exists a monotone map $r : \iota \to \iota'$ such that $\mu' \circ r = \mu$. An equivalence class of causal paths is called a \textit{causal curve}. Since being future-directed is preserved by $\sim$, we can also say a causal curve is future-directed without ambiguity.

A point $x\in\mathcal{M}$ \textit{causally precedes} another point $y\in\mathcal{M}$, written $x\prec y$, if there exists a future-directed causal curve from $x$ to $y$, or if $x=y$.
The assumption of time-orientability of $\mathcal{M}$ is not enough to ensure that $\prec$ gives a total order on points in a causal curve - for instance there could be closed timelike curves in $\mathcal{M}$ containing points $x\neq y$, for which $x\prec y$ and $y\prec x$.

A \textit{region} is any arbitrary subset $A\subseteq \mathcal{M}$ of the manifold.
Regions are too general to be useful for many practical applications, they might contain points which causally precede each other or they might have insufficient topological properties to make them well-behaved.
As a result we will be more interested in a restricted class of regions, the \textit{spacelike} regions, where for all $x,y\in \Sigma$, $x\neq y$, $x$ does not causally precede $y$ and thus there are no future-directed causal curves connecting $x$ with $y$, or $y$ with $x$.
For instance, in Minkowski space the surfaces given by fixed times $t=\tau$ are examples of spacelike sets.

\begin{defn}[Spacelike Slice]
  A spacelike slice (or simply a ``slice'') is a closed spacelike set.
\end{defn}

It is worth noting that slices may still be too weak for many applications, and it may be necessary to demand further properties of them, by working with the Cauchy slices for instance.
Whilst we do not make these restrictions in this work, in principle, there is no obstacle to applying many of the same methods to categories of more restrictive classes of slices.

We will be very interested in the causal relationship between slices $X$ and $Y$, which motivates the following definition.
\begin{defn}[Jointly Spacelike Slices]
  Slices $X$ and $Y$ are jointly spacelike if their union $X\cup Y$ is spacelike.
\end{defn}

Given regions $A,B \subseteq \mathcal{M}$, $A\neq B$, we say that a future-directed causal curve $\gamma$ with representative path $\mu:\iota\morph{}\mathcal{M}$, \textit{passes through $A$ and then $B$} if there exists a $q\in \iota$ with $\mu(q)\in B$ and for all such $q$ there exists $p\leq q\in\iota$ such that $\mu(p)\in A$.
We write $\cat{C}[A,B]$ for the set of future-directed causal curves passing through $A$ and then $B$.
We write $\cat{C}[A]:=\cat{C}[A,A]$ for the set of future-directed causal curves which pass through $A$ (with no constraint on other regions through which they must pass).
It is worth noting that a closed timelike curve $\gamma$ containing both the points $a\in A$ and $b\in B$ will be in the sets $\cat{C}[A,B]$ and $\cat{C}[B,A]$.

\subsection{A Category of Causal Curves}
With these definitions in place we can define the following categories of slices and regions of spacetime:

\begin{defn}[$\Slice, \Space$]
  The category $\Slice$ has as objects slices $X\subset \mathcal{M}$ (closed spacelike sets).
  For two slices $X,Y \subset \mathcal{M}$, the homset $\Slice(X,Y):= \mathcal{P}(\cat{C}[X,Y])$ is the powerset of $\cat{C}[X,Y]$, that is, a morphism $X\morph{}Y$ is a set of future-directed causal curves through $X$ then $Y$.
  Given two subsets $S:X\morph{}Y$ and $T:Y\morph{}Z$, their composition is given by intersection: $T\circ S:= T\cap S \subset \cat{C}[X,Z]$.
  The identity morphism $1_X:X\morph{}X$ is given by the set $\cat{C}[X,X]$ of all curves through $X$.

  The category $\Space$ has as objects arbitrary regions $A\subseteq\mathcal{M}$.
  All other data is as $\Slice$.
\end{defn}

\begin{figure}[h]
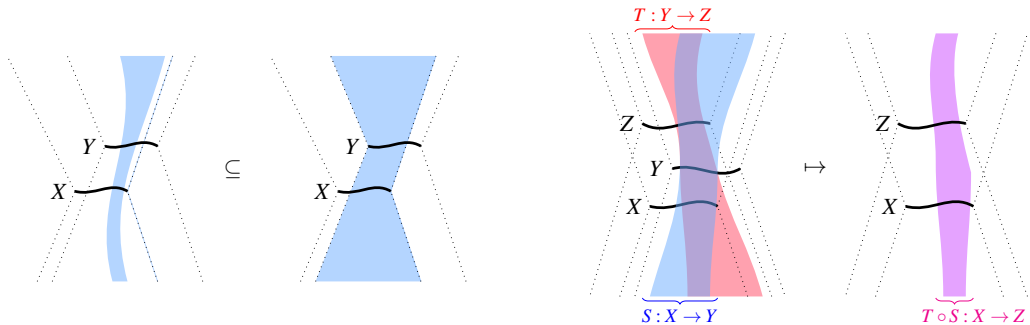

  \centering
  \tikzfig{morphism1}\qquad\qquad
  \tikzfig{compose}
  \caption{\label{fig:morphism} \textit{Left:} A morphism in the category $\Slice$ is a set of causal curves passing first through $X$ then through $Y$. \textit{Right:} Composition of two morphisms in $\Slice$ via intersection. Note that in both pictures, past as future light cones of slices are depicted as dotted lines, and sets of many causal curves are depicted as filled-in regions.}
\end{figure}

\begin{prop}
  $\Slice$ and $\Space$ are categories.
\end{prop}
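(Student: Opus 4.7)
The plan is to verify in sequence the three axioms a category must satisfy: well-typedness of composition, associativity, and unitality. Since the data is identical for $\Slice$ and $\Space$ (only the objects differ, and the proofs never use the spacelike condition on slices), a single argument works for both.

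\textbf{Well-typedness of composition.} First I would check that for $S : X \to Y$ and $T : Y \to Z$, the intersection $T \cap S$ is genuinely a subset of $\cat{C}[X,Z]$. Since $T \cap S \subseteq \cat{C}[X,Y] \cap \cat{C}[Y,Z]$, it suffices to show $\cat{C}[X,Y] \cap \cat{C}[Y,Z] \subseteq \cat{C}[X,Z]$. Given $\gamma$ in the intersection, with representative $\mu : \iota \to \mathcal{M}$, I would unwind the definition: for any $r \in \iota$ with $\mu(r) \in Z$, membership in $\cat{C}[Y,Z]$ produces some $q \leq r$ with $\mu(q) \in Y$, and membership in $\cat{C}[X,Y]$ then produces $p \leq q \leq r$ with $\mu(p) \in X$. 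Existence of at least one such $r$ follows from $\gamma \in \cat{C}[Y,Z]$. Hence $\gamma \in \cat{C}[X,Z]$.

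\textbf{Associativity.} This is immediate from the associativity of set intersection: $(U \cap T) \cap S = U \cap (T \cap S)$ for any subsets. The only thing to confirm is that both bracketings produce morphisms of the same type, which is handled by iterating the previous step.

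\textbf{Unitality.} For $S : X \to Y$ I would show $S \cap \cat{C}[Y,Y] = S = \cat{C}[X,X] \cap S$. Both identities reduce to the two inclusions $\cat{C}[X,Y] \subseteq \cat{C}[X,X]$ and $\cat{C}[X,Y] \subseteq \cat{C}[Y,Y]$. The right-hand set $\cat{C}[Y,Y]$ is by definition the set of future-directed causal curves that meet $Y$ (the ``then'' condition is vacuous, satisfied by taking $p = q$), so any curve through $X$ then $Y$ trivially lies in it; similarly any curve passing through $X$ at some parameter lies in $\cat{C}[X,X]$.

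\textbf{Main obstacle.} The only non-routine step is the first: one must be careful with the order-quantifier pattern in the definition of ``passes through $A$ then $B$,'' because it is not simply the conjunction of passing through $A$ and passing through $B$ but also requires the ordering of parameters. For closed timelike curves (which the setup does not rule out), the same curve can lie in both $\cat{C}[X,Y]$ and $\cat{C}[Y,X]$, so transitivity of the ``then'' relation is a real statement rather than a tautology, and checking it amounts to the chaining argument above.
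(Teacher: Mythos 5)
Your proof is correct and follows essentially the same route as the paper's: associativity comes from associativity of intersection, and unitality from the inclusions $\cat{C}[X,Y]\subseteq\cat{C}[X,X]$ and $\cat{C}[X,Y]\subseteq\cat{C}[Y,Y]$. The one genuine addition is your explicit verification that $T\cap S\subseteq\cat{C}[X,Z]$ via the chaining of the order-quantifier condition (the only place where the ``then'' ordering, rather than mere membership, matters); the paper builds this silently into the definition of composition, and your check is correct and worth making explicit given that closed timelike curves are not excluded.
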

\begin{proof}
  Composition is associative because intersection is.
  Given a set of causal curves $S:X\morph{}Y$, by definition all curves in $S$ pass through $X$, thus we see $S\circ 1_X = S\cap \cat{C}[X,X] = S$.
  Similarly for the left composition with identity morphisms.
\end{proof}

Now we examine a few basic categorical properties of $\Slice$ and $\Space$.
\begin{prop}
  $\Slice$ and $\Space$ have equalisers and coequalisers, given by the complement of the symmetric difference.
\end{prop}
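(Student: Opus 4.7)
The plan is to construct equalisers directly from the observation that each hom-set $\Slice(X,Y) = \mathcal{P}(\cat{C}[X,Y])$ is a Boolean algebra and that composition is set-theoretic intersection. Coequalisers then follow by the dual construction on the codomain side, and the verification carries over verbatim to $\Space$ because no step of the argument uses the spacelike property of slices.

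First I would rephrase the equalising condition in terms of sets of curves. For $S,T:X\morph{}Y$ and any $e:E\morph{}X$, composition gives $S\circ e = S\cap e$ and $T\circ e = T\cap e$, so $S\circ e = T\circ e$ if and only if $e\cap (S\Delta T) = \emptyset$. Since $S\Delta T\subseteq \cat{C}[X,Y]\subseteq \cat{C}[X,X]$, the complement $\cat{C}[X,X]\setminus(S\Delta T)$ makes sense as a morphism $X\morph{}X$. My candidate for the equaliser is therefore $E := X$ together with
$$e \;:=\; \cat{C}[X,X]\setminus(S\Delta T) \;:\; X\morph{}X.$$
The equalising property is immediate: $S\cap e = T\cap e = S\cap T$.

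For the universal property, given $f:Z\morph{}X$ with $S\circ f = T\circ f$, the same rephrasing yields $f\cap(S\Delta T)=\emptyset$. Using the containment $\cat{C}[Z,X]\subseteq \cat{C}[X,X]$ (any curve through $Z$ then $X$ in particular passes through $X$), one gets $f\subseteq e$. The factorisation is then $h:=f$ itself, viewed as a morphism $Z\morph{}X$, since $e\circ h = e\cap f = f$. The coequaliser is then dual: take the object $Y$ with morphism $c := \cat{C}[Y,Y]\setminus (S\Delta T): Y\morph{}Y$, and run the symmetric argument on the codomain side.

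The step I expect to be the main obstacle is the \emph{uniqueness} of the factorisation $h$. A naive computation shows that any $h':Z\morph{}X$ satisfying $e\cap h' = f$ must contain $f$ and agree with $f$ outside $S\Delta T$, but could in principle differ from $f$ on $\cat{C}[Z,X]\cap(S\Delta T)$. To make this really unique one either needs to pick the equaliser object more carefully — possibly as a slice chosen so that $\cat{C}[Z,E]\cap(S\Delta T)=\emptyset$ is forced — or to interpret ``complement of the symmetric difference'' as the canonical representative in an appropriate quotient on homsets. Pinning down exactly which of these refinements delivers genuine universality is where all the real work of the proof lies; once this is settled the same fix simultaneously handles coequalisers and transfers unchanged to $\Space$.
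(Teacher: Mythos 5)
Your construction coincides with the paper's: the equaliser object is the domain itself, the equalising arrow is the complement of the symmetric difference viewed as an endomorphism, the mediating arrow for an equalising $f$ is $f$ itself, and the coequaliser is obtained dually on the codomain. (The paper takes the complement inside $\cat{C}[X,Y]$ rather than $\cat{C}[X,X]$; since $S\mathbin{\triangle}T\subseteq\cat{C}[X,Y]\subseteq\cat{C}[X,X]$ this is an inessential difference, and your choice is in fact the more convenient one for checking that $e\cap f=f$.) Your verification of the equalising property and of the existence of a factorisation is correct, and you are right that nothing in the argument uses spacelikeness, so it applies verbatim to $\Space$.

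The genuine gap is the one you name yourself and then decline to close: uniqueness of the mediating arrow, which is the entire content of the universal property. A proposal that ends by saying this is ``where all the real work of the proof lies'' has not proved the proposition. The paper dispatches uniqueness in a single sentence --- any $h$ with $S\circ h=T\circ h$ satisfies $h\cap(S\mathbin{\triangle}T)=\varnothing$ and hence $h\subseteq(S\mathbin{\triangle}T)^{c}$ --- but observe that this only exhibits $h$ itself as a factorisation; it does not rule out a second mediating arrow differing from $h$ on curves outside $e$. Your worry is therefore substantive: taking $Z=X$ and $h=\varnothing$, both $k=\varnothing$ and $k=S\mathbin{\triangle}T$ satisfy $e\cap k=\varnothing$, so with $e$ defined as you (and the paper) define it, the factorisation is not unique whenever $S\neq T$. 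As written your argument establishes only a weak equaliser; to complete it you must either supply the missing uniqueness argument or implement one of the two repairs you sketch (shrinking the equaliser object so that $\cat{C}[Z,E]\cap(S\mathbin{\triangle}T)=\varnothing$ is forced, or quotienting the homsets), and neither is carried out.
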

\begin{proof}
  Take $f,g:A\morph{}B$.
  This pair of parallel arrows is equalised by $(f\mathbin{\triangle} g)^c:A\morph{}A$ and coequalised by $(f\mathbin{\triangle} g)^c:B\morph{}B$ where $(f\mathbin{\triangle} g)^c = \cat{C}[A,B]\backslash (f\mathbin{\triangle} g) = (f\cup g)^c \cup (f\cap g)$.
  Any other arrow $h$ making the parallel pair $f$ and $g$ equal factorises uniquely via $(f\mathbin{\triangle} g)^c$ because this morphism contains every causal curve that is in both $f$ and $g$, or neither.
  Thus $h$ must be a subset of $(f\mathbin{\triangle} g)^c$.
\end{proof}
It is interesting that equalisers and coequalisers essentially coincide in $\Slice$ - in part this is down to the fact that composition is, up to types, commutative - e.g.\ for endomorphisms $f\circ g = g\circ f$.

\begin{prop}\label{prop:sliceprocopro}
  Let $X$ and $Y$ be jointly spacelike slices with $X\cap Y=\varnothing$.
  Then the product and coproduct of $X$ and $Y$ exist in $\Slice$ and are given by the set theoretic union $X\times Y = X\oplus Y = X\cup Y$.
\end{prop}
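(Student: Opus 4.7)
The plan is to let $W := X \cup Y$, which is a slice because $X$ and $Y$ are jointly spacelike, and to propose explicit candidate projections, injections, pairings, and copairings. Concretely, take $\pi_X := \cat{C}[W,X]$, $\pi_Y := \cat{C}[W,Y]$, and dually $\iota_X := \cat{C}[X,W]$, $\iota_Y := \cat{C}[Y,W]$; define the pairing and copairing by set-theoretic union, $\langle f,g\rangle := f\cup g$ and $[f,g] := f\cup g$.

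The cornerstone I would isolate first is the following disjoint-decomposition lemma: for every slice $Z$,
\[
\cat{C}[Z,W] \;=\; \cat{C}[Z,X] \sqcup \cat{C}[Z,Y], \qquad \cat{C}[W,Z] \;=\; \cat{C}[X,Z] \sqcup \cat{C}[Y,Z].
\]
The inclusions $\supseteq$ are immediate from $X, Y \subseteq W$. For $\subseteq$, any causal curve $\gamma$ meeting $W$ does so at exactly one point, because two distinct meeting points would be causally ordered along $\gamma$ and this contradicts the spacelikeness of $W$. The unique meeting point lies in $X$ or in $Y$, and disjointness $X\cap Y = \varnothing$ rules out both, producing the disjoint partition.

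Equipped with this lemma, verifying the product universal property is largely bookkeeping. Given $f:Z\to X$ and $g:Z\to Y$, the decomposition ensures $f\cup g \subseteq \cat{C}[Z,W]$, and composition with $\pi_X$ reads $\pi_X \cap (f\cup g) = (\pi_X\cap f)\cup(\pi_X\cap g)$. The first summand equals $f$ since every curve in $f$ already passes through $X$, while the second is empty because curves in $g$ pass through $Y$ and hence cannot pass through $X$. Therefore $\pi_X\circ\langle f,g\rangle = f$, and symmetrically $\pi_Y\circ\langle f,g\rangle = g$. Uniqueness follows by splitting any competitor $h:Z\to W$ along the lemma as $h = h_X \sqcup h_Y$; the commutation conditions then force $h_X = f$ and $h_Y = g$, so $h = f\cup g$.

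The coproduct claim is entirely dual, using the second decomposition: $[f,g]\circ\iota_X = f$ and $[f,g]\circ\iota_Y = g$ follow by the same argument with the roles of domain and codomain swapped, and uniqueness splits $h:W\to Z$ as $h_X\sqcup h_Y$ the same way. The only genuine subtlety, and the place where the main obstacle might otherwise lurk, is the hypothesis $X\cap Y = \varnothing$: without it, a curve through a shared point of $X$ and $Y$ would inhabit both $\cat{C}[X,Z]$ and $\cat{C}[Y,Z]$, the partition would cease to be disjoint, and distinct mediating morphisms could agree on both components, destroying the uniqueness half of the universal property.
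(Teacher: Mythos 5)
Your proof is correct and follows essentially the same route as the paper: the same candidate projections/injections (your $\cat{C}[W,X]$ coincides with the paper's $\cat{C}[X]$ since $X\subseteq W$), pairing and copairing by union, and the same key observation that a curve meeting the spacelike set $W=X\cup Y$ does so at a single point lying in exactly one of $X$, $Y$. Your write-up is in fact somewhat more complete, since you isolate the disjoint-decomposition lemma and spell out the uniqueness half of the universal property, which the paper's proof leaves implicit.
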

\begin{proof}
  Proof given in appendix \ref{proof:sliceprocopro}.
\end{proof}

While we do have products and coproducts of non-intersecting jointly spacelike slices in $\Slice$, the (co)products of other regions e.g.\ timelike separated regions and of intersecting slices do not exist.
These regions are the main issue preventing the set theoretic union from being a monoidal structure on $\Slice$.

\begin{prop}
  $\Slice$ is \textbf{not} a monoidal category under a monoidal product given by taking the union of regions and curves $X\otimes Y := X\cup Y$ and $S\otimes T := S\cup T$.
\end{prop}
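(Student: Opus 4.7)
The plan is to refute monoidality by exhibiting that the proposed operation $X\otimes Y := X\cup Y$ fails to define even a bifunctor $\Slice\times\Slice\morph{}\Slice$, let alone one satisfying the monoidal axioms. Two independent failures can be extracted: the tensor of objects need not remain in $\Slice$, and the tensor of morphisms does not preserve composition. I would organise the proof around these two observations.

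For the object-level failure, I would produce two slices whose union is not spacelike. A minimal witness lives in Minkowski space $\reals^{1+1}$: take the singletons $X=\{(0,0)\}$ and $Y=\{(1,0)\}$. Each is closed and trivially spacelike (no two distinct points to compare). The two points satisfy $(0,0)\prec(1,0)$ via the obvious future-directed timelike segment, so the union $X\cup Y$ is not spacelike and hence not an object of $\Slice$. This alone defeats any reading of $\otimes$ as endofunctorial on $\Slice$.

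For the morphism-level failure, even restricting to pairs of jointly spacelike slices (where the objects behave) the operation fails to preserve composition. Using distributivity of $\cap$ over $\cup$,
\begin{equation*}
(f'\otimes g')\circ(f\otimes g) = (f'\cup g')\cap(f\cup g) = (f\cap f')\cup(f\cap g')\cup(g\cap f')\cup(g\cap g'),
\end{equation*}
whereas $(f'\circ f)\otimes(g'\circ g) = (f\cap f')\cup(g\cap g')$. The discrepancy is exactly the ``cross terms'' $f\cap g'$ and $g\cap f'$. A minimal explicit counterexample: take $X=Y=X'=Y'$ to be a single slice admitting two distinct future-directed causal curves $c_1,c_2\in\cat{C}[X]$, and put $f=g'=\{c_1\}$ and $g=f'=\{c_2\}$. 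Then $(f'\circ f)\otimes(g'\circ g)=\varnothing$, while $(f'\otimes g')\circ(f\otimes g)=\{c_1,c_2\}$. Note that identities are in fact preserved, since $\cat{C}[X]\cup\cat{C}[Y]=\cat{C}[X\cup Y]$, so the obstruction is localised to composition.

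The main obstacle is conceptual rather than technical: one must be careful to separate and name the two distinct failures, since each motivates a different weakening developed in the remainder of the paper (a promonoidal structure handling the object-level defect, and a premonoidal structure handling the morphism-level one). The arguments themselves reduce to elementary set theory once the right counterexample slices and curves have been chosen.
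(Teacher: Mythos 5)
Your proof is correct and follows essentially the same two-pronged argument as the paper: the union of slices need not be a slice, and $(S'\cup T')\cap(S\cup T)\supseteq(S'\cap S)\cup(T'\cap T)$ with the inclusion generally strict, so bifunctoriality fails. The only difference is that you supply explicit witnesses (the two timelike-separated singletons and the curves $c_1,c_2$) where the paper merely asserts the strict inclusion, which is a harmless strengthening.
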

\begin{proof}
  The union of slices is not always a slice so $X\cup Y$ may not be an object of $\Slice$.
  For the occasions when it is, $\otimes$ cannot in general be bifunctorial.
  For arbitrary $S:X\morph{}Y$, $S':Y\morph{}Z$, $T:X'\morph{}Y'$ and $T':Y'\morph{}Z'$, we have $(S'\otimes T')\circ (S\otimes T) = (S'\cup T')\cap(S\cup T) \supset (S'\cap S)\cup(T'\cap T) = (S'\circ S)\otimes(T'\circ T)$.
\end{proof}

One might hope that by relaxing the sorts of objects we are considering and working instead with the category $\Space$, we could find a monoidal product given by union.
Whilst this resolves the issue of the non-existence of the object $X\cup Y$ for arbitrary $X$ and $Y$, we still find that the union cannot be bifunctorial and thus $\Space$ is also not a monoidal category under union.

We also cannot hope that $\Slice$ or $\Space$ are monoidal categories under intersection because there exist causally connected slices which have an empty intersection:
\begin{prop}
  $\Slice$ and $\Space$ are \textbf{not} monoidal categories under a monoidal product given by taking the intersection of regions and curves $X\otimes Y := X\cap Y$ and $S\otimes T := S\cap T$.
\end{prop}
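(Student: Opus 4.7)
The plan is to produce a concrete counterexample in Minkowski space showing that the intersection operation already fails to be a bifunctor—specifically, it does not preserve identities—which is enough to rule out monoidality for both $\Slice$ and $\Space$.

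First I would dispatch the easy part, namely that the candidate tensor is at least well-defined on objects: intersections of closed sets are closed, and any subset of a spacelike set is spacelike, so $X\cap Y$ is again a slice whenever $X$ and $Y$ are, and it is of course a region when $X,Y$ are regions. The real issue lies with identities. Bifunctoriality demands $1_X\otimes 1_Y = 1_{X\cap Y}$, which unpacks to
\[ \cat{C}[X]\cap\cat{C}[Y] \;=\; \cat{C}[X\cap Y], \]
and this is precisely the equation the counterexample will violate.

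Take $\mathcal{M}=\reals^{1+1}$ with the usual Minkowski metric, and consider the point-slices $X=\{(0,0)\}$ and $Y=\{(1,0)\}$; both are closed and vacuously spacelike, hence objects of $\Slice$ (and of $\Space$). The straight segment from $(0,0)$ to $(1,0)$ has timelike future-directed tangent, so it represents a causal curve $\gamma$ belonging to both $\cat{C}[X]$ and $\cat{C}[Y]$; hence $1_X\otimes 1_Y = \cat{C}[X]\cap\cat{C}[Y]$ is nonempty. On the other hand $X\cap Y = \varnothing$, and no path satisfies the defining condition $\mu(q)\in\varnothing$, so $\cat{C}[X\cap Y]=\varnothing$ and $1_{X\cap Y}=\varnothing$. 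The two sides of the identity-preservation law therefore disagree, so $\otimes$ is not even a functor, and $\Slice$ (respectively $\Space$) cannot carry a monoidal structure with this tensor.

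The main difficulty here is conceptual rather than technical: intersection is blind to causal links between disjoint regions, whereas composition in $\Slice$ and $\Space$ is defined precisely to record those links. Once this mismatch is identified, any pair of disjoint but causally connected slices furnishes the counterexample, making the failure robust and independent of the particular choice of timelike-separated points.
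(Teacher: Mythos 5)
Your proof is correct and takes essentially the same approach as the paper: both arguments exhibit causally connected slices with empty intersection and observe that $1_X\otimes 1_Y=\cat{C}[X]\cap\cat{C}[Y]\neq\varnothing$ while $1_{X\cap Y}=1_\varnothing=\varnothing$, so identities are not preserved. The only difference is that you instantiate the counterexample concretely with two timelike-separated points in Minkowski space, whereas the paper simply posits such a pair of slices.
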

\begin{proof}
  Suppose $X$ and $Y$ are causally connected slices so $\cat{C}[X,Y]\neq\varnothing$ but with $X\cap Y=\varnothing$.
  Then $1_X\otimes 1_Y = \cat{C}[X,X]\cap\cat{C}[Y,Y] \neq \varnothing$ because there exists a causal curve passing through $X$ and $Y$.
  On the other hand we see that $1_{X\cap Y} = 1_\varnothing = \varnothing$.
\end{proof}

In the following sections we will show that while $\Slice$ and $\Space$ are not monoidal categories in either of these ways, $\Slice$ is a promonoidal category under intersection.
Under union, $\Space$ is premonoidal while $\Slice$ combines both promonoidal and premonoidal structures.

\section{A Promonoidal Structure on \sf{Slice}}\label{sec:slice_promonoidal}
We now aim to show that $\Slice$ is a promonoidal category under intersection, that is, it is equipped with a tensor product functor $\Slice\times\Slice\morph{}[\opcat{\Slice},\set]$ and unit presheaf $\opcat{\Slice}\morph{}\set$ subject to associativity and unit laws.

To each pair of objects $X$ and $Y$ we assign the presheaf $(X\oand Y)(-):\opcat{\cat{\Slice}}\morph{}\set$ which sends a slice $Z$ to the powerset of causal curves which pass through $Z$ and then both $X$ and $Y$
\begin{equation*}
  (X\oand Y)(Z) := \mathcal{P}(\cat{C}[Z,X]\cap\cat{C}[Z,Y])
\end{equation*}
On morphisms $S:Z'\morph{}Z$ this presheaf acts by intersection:
\begin{equation*}
  (X\oand Y)(S) :(X\oand Y)(Z) \morph{} (X\oand Y)(Z') :: C\mapsto C\cap S
\end{equation*}

\begin{lem}
  $(X\oand Y)(-)$ is a presheaf.
\end{lem}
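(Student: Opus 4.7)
The plan is to unpack the three things required of a functor $\opcat{\Slice}\to\set$: well-definedness of the action on morphisms, preservation of identities, and contravariant preservation of composition. All three turn out to follow from elementary properties of set intersection together with one transitivity observation about causal curves.

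First I would verify well-definedness, which is the only step with any geometric content. Given $S : Z' \to Z$ in $\Slice$ (so $S \subseteq \cat{C}[Z',Z]$) and $C \in (X\oand Y)(Z)$ (so $C \subseteq \cat{C}[Z,X]\cap \cat{C}[Z,Y]$), I need to show $C\cap S \subseteq \cat{C}[Z',X]\cap\cat{C}[Z',Y]$. Pick any $\gamma\in C\cap S$ with representative $\mu:\iota\to\mathcal{M}$. For any $q\in\iota$ with $\mu(q)\in X$, membership in $C$ gives some $p\leq q$ with $\mu(p)\in Z$, and then membership in $S$ gives some $p'\leq p$ with $\mu(p')\in Z'$. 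Thus $p'\leq q$ witnesses $\gamma\in\cat{C}[Z',X]$; the identical argument with $Y$ in place of $X$ gives $\gamma\in\cat{C}[Z',Y]$. This is the step where a little care is required, but it is essentially just transitivity of ``before'' along the reparametrised interval.

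Next, preservation of identities is immediate: for any $C\in(X\oand Y)(Z)$, every $\gamma\in C$ passes through both $Z$ and $X$, hence through $Z$ a fortiori, so $C\subseteq\cat{C}[Z]=1_Z$, giving $(X\oand Y)(1_Z)(C)=C\cap 1_Z=C$. Finally, for contravariance of composition, consider $S:Z''\to Z'$ and $T:Z'\to Z$ in $\Slice$, whose composite $T\circ S = T\cap S$ lives in $\Slice(Z'',Z)$. For any $C\in(X\oand Y)(Z)$,
\begin{equation*}
  (X\oand Y)(T\circ S)(C) \;=\; C\cap(T\cap S) \;=\; (C\cap T)\cap S \;=\; (X\oand Y)(S)\bigl((X\oand Y)(T)(C)\bigr),
\end{equation*}
by associativity of intersection, which is exactly the contravariant functoriality equation.

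The only substantive step is the well-definedness check; everything else is bookkeeping with $\cap$. I expect no further obstacles, and would conclude by remarking that the same pattern will reappear when checking naturality of the tensor profunctor in its first two arguments later on.
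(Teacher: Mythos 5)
Your proof is correct and follows essentially the same route as the paper's: identity preservation because every curve in $(X\oand Y)(Z)$ already passes through $Z$, and contravariant functoriality by associativity of intersection. The explicit well-definedness check that $C\cap S$ lands in $\cat{C}[Z',X]\cap\cat{C}[Z',Y]$ is a welcome extra step that the paper leaves implicit, and your transitivity argument for it is sound.
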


\begin{proof}
  $(X\oand Y)(1_Z)::C\mapsto C\cap 1_Z = C$ because every curve in $(X\oand Y)(Z)$ passes through $Z$.
  Thus $(X\oand Y)(1_Z) = 1_{(X\oand Y)(Z)}$.
  Now $(X\oand Y)(T) \circ (X\oand Y)(S) :: C\mapsto C\cap S \mapsto (C\cap S)\cap T$ while $(X\oand Y)(S\circ T) :: C\mapsto C\cap(S\cap T)$ and these are equal by the associativity of intersection.
\end{proof}

\begin{figure}
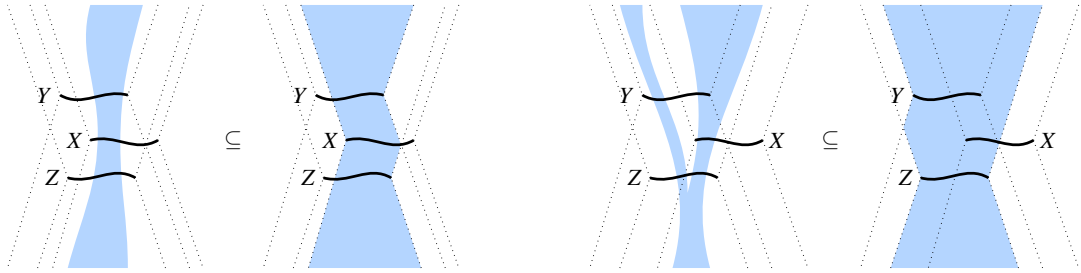

  \centering
  \tikzfig{conjunction}
  \qquad\qquad
  \tikzfig{disjunction}
  \caption{\label{fig:conj} \textit{Left:} An element $S \in (X \oand Y)(Z)$, as defined in Section~\ref{sec:slice_promonoidal}. \textit{Right:} An element $T \in (X \oor Y)(Z)$, as defined in Section~\ref{sec:slice_union}.}
\end{figure}

To each $(S,T):(X,Y)\morph{}(X',Y')$ we are required to assign a natural transformation between the presheaves $S\oand T: (X\oand Y)(-)\implies(X'\oand Y')(-)$.

For $S:X\morph{}X'$ there is a natural transformation with components
\begin{equation*}
  (S \oand Y)_Z :(X\oand Y)(Z)\morph{} (X'\oand Y)(Z) :: C\mapsto C\cap S
\end{equation*}
and for $T:Y\morph{}Y'$ there is a natural transformation with components
\begin{equation*}
  (X \oand T)_Z :(X\oand Y)(Z)\morph{} (X\oand Y')(Z) :: C\mapsto C\cap T
\end{equation*}
These natural transformations commute, $(S\oand Y')_Z(X\oand T)_Z = (X'\oand T)_Z(S\oand Y)_Z$ and we can define $(S\oand T)$ to be given by their composition.

\begin{lem}\label{prop:oand_nat}
  $(S \oand Y)$ and $(X\oand T)$ are natural transformations with $(S\oand Y')_Z(X\oand T)_Z = (X'\oand T)_Z(S\oand Y)_Z$.
\end{lem}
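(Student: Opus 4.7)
The plan is to reduce every claim in the lemma to the commutativity and associativity of set intersection, exactly as was done for the presheaf axioms in the preceding lemma. The only real content beyond that is tracking types, i.e.\ making sure that each component map lands in the presheaf it is supposed to map into. Once well-definedness is dispatched, each of the three equations becomes a one-line intersection shuffle.

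First I would verify well-definedness of the components. Given $C \in (X\oand Y)(Z) = \mathcal{P}(\cat{C}[Z,X] \cap \cat{C}[Z,Y])$, every curve in $C$ passes through $Z$ and then through both $X$ and $Y$. Since $S \subseteq \cat{C}[X,X']$, a curve in $C \cap S$ additionally passes through $X$ and then $X'$, and hence through $Z$ and then $X'$; combined with the $Y$-leg, this shows $C \cap S \in (X'\oand Y)(Z)$, so $(S\oand Y)_Z$ is well defined. The same argument, swapping the roles of the two factors, handles $(X\oand T)_Z$.

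Next I would check naturality of $(S\oand Y)$. For $U : Z' \morph{} Z$ in $\Slice$, one path around the square sends $C \mapsto C \cap S \mapsto (C \cap S) \cap U$ and the other sends $C \mapsto C \cap U \mapsto (C \cap U) \cap S$; these agree by associativity and commutativity of $\cap$. The naturality of $(X\oand T)$ is identical after relabelling. Finally, the interchange $(S\oand Y')_Z (X\oand T)_Z = (X'\oand T)_Z (S\oand Y)_Z$ evaluates on $C$ to $(C \cap T)\cap S$ on one side and $(C\cap S)\cap T$ on the other, and is again immediate.

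The hard part, such as it is, will not be any calculation but rather the bookkeeping: one must be careful that the curve-set $S$ is being treated sometimes as a morphism $X \morph{} X'$ (input for $\oand$) and sometimes as a subset of $\cat{C}[X,X']$ that participates in intersections in $\cat{C}[Z,\mathcal{M}]$. Because composition in $\Slice$ is literally set intersection, however, these two uses coincide, so no genuine obstruction arises.
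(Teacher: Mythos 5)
Your proof is correct and follows essentially the same route as the paper's: naturality and the interchange both reduce to associativity and commutativity of $\cap$ via the computation $C \mapsto (C\cap U)\cap S = (C\cap S)\cap U$. The only addition is your explicit well-definedness check that $C\cap S$ lands in $(X'\oand Y)(Z)$, which the paper leaves implicit but which is a reasonable (and correct) piece of bookkeeping.
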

\begin{proof}
  Proof given in appendix \ref{proof:oand_nat}.
\end{proof}

\begin{lem}\label{prop:oand_functor}
  The assignment $(X,Y)\mapsto (X\oand Y)(-)$ and $(S,T)\mapsto (S\oand T)$ gives a functor $\Slice\times\Slice\morph{}[\opcat{\Slice},\set]$.
\end{lem}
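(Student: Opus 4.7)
The plan is to verify the two functoriality axioms directly, since Lemma~\ref{prop:oand_nat} already supplies both halves of the structure: the object assignment $(X,Y)\mapsto(X\oand Y)(-)$ is a well-defined presheaf, and the arrow assignment $(S,T)\mapsto (S\oand T)$ lands in natural transformations. What remains is preservation of identities and of composition.

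For identities, I would evaluate $(1_X\oand 1_Y)$ componentwise. By construction $(1_X\oand 1_Y)_Z$ acts as $C\mapsto C\cap 1_X\cap 1_Y = C\cap \cat{C}[X,X]\cap \cat{C}[Y,Y]$, and since every element $C\in (X\oand Y)(Z)$ is a set of causal curves passing through $Z$ and then through both $X$ and $Y$, we have $C\subseteq \cat{C}[X,X]\cap\cat{C}[Y,Y]$ and hence the intersection returns $C$. Thus $(1_X\oand 1_Y) = 1_{(X\oand Y)(-)}$.

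For composition, consider pairs $(S,T):(X,Y)\morph{}(X',Y')$ and $(S',T'):(X',Y')\morph{}(X'',Y'')$. Recall composition in $\Slice$ is intersection, so $(S',T')\circ(S,T) = (S'\cap S,\, T'\cap T)$. I would compute both natural transformations componentwise at an object $Z$. On one hand, $((S'\cap S)\oand(T'\cap T))_Z$ sends $C\mapsto C\cap (S'\cap S)\cap(T'\cap T)$. On the other hand, $((S'\oand T')\circ(S\oand T))_Z$ sends $C\mapsto C\cap S\cap T\mapsto (C\cap S\cap T)\cap S'\cap T'$. These agree by associativity and commutativity of set-theoretic intersection, so $(S'\oand T')\circ(S\oand T) = (S'\cap S)\oand(T'\cap T)$ as natural transformations.

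There is no real obstacle here: everything reduces to the fact that composition in $\Slice$ is intersection, which is commutative and associative, so the combined transformation $S\oand T$ behaves like a single intersection with $S\cap T$. The only minor care needed is ensuring that one uses the factorisation $(S\oand T) := (S\oand Y')\circ(X\oand T) = (X'\oand T)\circ (S\oand Y)$ from Lemma~\ref{prop:oand_nat} consistently; either choice yields the same componentwise formula, which makes the composition check a matter of bookkeeping.
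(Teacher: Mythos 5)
Your proof is correct and follows essentially the same route as the paper's: both reduce the identity and composition checks to componentwise computations that come down to associativity and commutativity of set-theoretic intersection, with the paper verifying functoriality in each tensor factor separately before invoking the commutation from Lemma~\ref{prop:oand_nat}, while you compute the combined transformation $C\mapsto C\cap S\cap T$ directly. Your identity check is in fact slightly more explicit than the paper's, spelling out why $C\cap \cat{C}[X,X]\cap\cat{C}[Y,Y]=C$.
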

\begin{proof}
  Proof given in appendix \ref{proof:oand_functor}.
\end{proof}

We are now in a position to prove the main result of this section:
\begin{thm}\label{prop:sliceispro}
  $\Slice$ is a symmetric promonoidal category where the tensor is given above and the unit presheaf is given by $I(Z):= \mathcal{P}(\cat{C}[Z,Z])$.
\end{thm}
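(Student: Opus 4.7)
The plan is to factor every structural isomorphism through a symmetric ``$n$-ary'' presheaf
\[
(X_1 \oand \cdots \oand X_n)(W) := \mathcal{P}\!\left(\bigcap_{i} \cat{C}[W, X_i]\right),
\]
which is manifestly independent of parenthesization and order. We have already seen that $(X \oand Y)(-)$ has this form for $n=2$, and the strategy is to exhibit, for each iterated binary tensor appearing in the associativity or unit laws, a natural isomorphism to the corresponding $n$-ary presheaf given on representatives by $(S_1,\dots,S_n) \mapsto \bigcap_i S_i$. Once these isomorphisms are in place, associativity, the unit laws, and all coherences will reduce to the associativity, commutativity, and identity-on-subsets-of-$\cat{C}[W]$ properties of set intersection.

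Concretely, by writing out the profunctor compositions and applying the ninja Yoneda lemma once to collapse the identity profunctor, I would first unpack the iterated binary tensors as
\[
\bigl(\otimes(\otimes \times 1)\bigr)^W_{X,Y,Z} = \int^A (A\oand Z)(W) \times (X\oand Y)(A), \qquad \bigl(\otimes(1 \times \otimes)\bigr)^W_{X,Y,Z} = \int^A (X\oand A)(W) \times (Y\oand Z)(A),
\]
and similarly $\bigl(\otimes(I \times 1)\bigr)^W_X = \int^A I(A)\times (A\oand X)(W)$. The intersection map $(S,T)\mapsto S\cap T$ is well-defined on each coend because the sliding relation identifies $(S\cap f, T)$ with $(S, T\cap f)$, whose intersections agree by commutativity of $\cap$. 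Surjectivity onto the relevant $n$-ary presheaf (respectively $\Slice(W,X)$) follows by taking the coend index $A = W$ and using the identities $(W\oand Y)(W) = \mathcal{P}(\cat{C}[W,Y])$ and $I(W) = \mathcal{P}(\cat{C}[W])$. The main obstacle is injectivity of the intersection map on the coend; I would handle this by exhibiting, for each representative $(A, (S, T))$ with $S\cap T = U$, a single explicit slide along the morphism $S : W \to A$ in $\Slice$ (using that $S \subseteq \cat{C}[W, A]$) applied to the pair $(\cat{C}[W,Z], T)$, which identifies $(A,(S,T))$ with the canonical form $(W,(\cat{C}[W,Z], U))$. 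Two representatives with the same intersection are therefore coend-equivalent to a common element.

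Symmetry is essentially free, since $(X\oand Y)(W) = (Y\oand X)(W)$ literally and set intersection is commutative; the hexagon and compatibility of the braiding with the associator reduce to this observation. The pentagon and triangle coherence conditions follow by remarking that any composite of intersection maps out of a fully-parenthesized binary tensor factors through the same ``flatten'' map into the $n$-ary presheaf, so both sides of each coherence square agree. Naturality in $(X,Y,Z)$ is likewise routine because the morphism-action of $\oand$ is itself given by intersection, and intersections compose associatively. The bulk of the technical work therefore lives in the coend bookkeeping for the injectivity step; everything else is formal.
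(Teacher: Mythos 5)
Your proposal is correct and follows essentially the same route as the paper's proof: both identify each iterated tensor with the flattened presheaf $\mathcal{P}\bigl(\bigcap_i \cat{C}[W,X_i]\bigr)$ via the cowedge $(S,T)\mapsto S\cap T$ with inverse $S\mapsto(S,S)$ inserted at the coend index $A=W$, and reduce all coherences to properties of intersection. Your canonical-form argument for injectivity is just a repackaging of the paper's verification that $(S,T)\sim(S\cap T,S\cap T)$ under the sliding relations, so there is nothing substantively different to flag.
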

\begin{proof}
  Proof given in appendix \ref{proof:sliceispro}.
\end{proof}

Now we know that $\Slice$ is promonoidal under intersection, we will study when the presheaves assigned by this tensor are representable.
This allows us to ascertain where $\oand$ acts like a standard monoidal product on $\Slice$ and where it is possible for us to consider the tensor of slices to be another slice.

\begin{thm}
  When $X$ and $Y$ are jointly spacelike slices, the presheaf $(X\oand Y)(-)$ is representable.
\end{thm}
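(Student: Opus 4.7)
The natural guess for the representing object is $W := X \cap Y$. Since $X$ and $Y$ are closed, so is $X\cap Y$, and since $X\cap Y \subseteq X$ which is spacelike, $X\cap Y$ is spacelike too; hence $X\cap Y$ is a bona fide object of $\Slice$ (with the convention that $\varnothing$ counts as a slice). The plan is to exhibit a natural isomorphism $\Slice(-,X\cap Y) \cong (X\oand Y)(-)$, which by Yoneda yields representability.

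The heart of the argument is the set-level equality
\begin{equation*}
  \cat{C}[Z,X]\cap \cat{C}[Z,Y] \;=\; \cat{C}[Z, X\cap Y],
\end{equation*}
for every slice $Z$. The inclusion $\supseteq$ is immediate from $X\cap Y \subseteq X$ and $X\cap Y \subseteq Y$ together with the definition of ``passes through $Z$ then $-$''. The other inclusion is the geometric crux of the proof, and is where the joint-spacelike hypothesis enters. The key observation I would establish first as a small lemma is: \emph{any future-directed causal curve meets a spacelike set in at most one point.} This follows directly from the definition of spacelike region in Section~\ref{sec:slices} --- if $\mu(p),\mu(q)$ both lie in a spacelike $\Sigma$ with $p<q$, then $\mu(p)\prec \mu(q)$ via the curve itself, forcing $\mu(p)=\mu(q)$. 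Applied to $\Sigma = X\cup Y$ (spacelike by hypothesis), any $\gamma \in \cat{C}[Z,X]\cap \cat{C}[Z,Y]$ has a unique intersection point with $X\cup Y$ that lies simultaneously in $X$ and in $Y$, hence in $X\cap Y$; and the ``then $Z$'' condition clearly transports, yielding $\gamma\in \cat{C}[Z,X\cap Y]$.

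Taking powersets of both sides produces a bijection $\Slice(Z,X\cap Y) \cong (X\oand Y)(Z)$ for each $Z$. To promote this to a natural isomorphism in $Z$, observe that both sides act contravariantly on a morphism $S:Z'\to Z$ by intersection with $S$, so naturality is automatic. Hence $(X\oand Y)(-) \cong \yo_{X\cap Y}$, which is exactly representability.

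The only real obstacle is the single-intersection lemma; once that is in hand the rest is bookkeeping. A subtlety worth checking is the degenerate case $X\cap Y = \varnothing$: then $\cat{C}[Z,X\cap Y] = \varnothing$, and the lemma forces $\cat{C}[Z,X]\cap \cat{C}[Z,Y] = \varnothing$ as well (a curve hitting $X$ then $Y$ would hit two distinct points of the spacelike set $X\cup Y$), so both sides collapse to the singleton $\{\varnothing\}$ and the isomorphism still holds.
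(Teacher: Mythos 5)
Your proposal is correct and follows essentially the same route as the paper: the representing object is $X\cap Y$ (a slice since it is a closed subset of the spacelike set $X$), and the crux in both cases is that joint spacelikeness forces $\cat{C}[Z,X]\cap\cat{C}[Z,Y]=\cat{C}[Z,X\cap Y]$, because a future-directed causal curve cannot meet the spacelike set $X\cup Y$ in two distinct points. Your version merely isolates that geometric fact as an explicit lemma and spells out the naturality and the $X\cap Y=\varnothing$ edge case, which the paper leaves implicit.
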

\begin{proof}
    Suppose $X$ and $Y$ are jointly spacelike.
    Note $\cat{C}[Z,X]\cap\cat{C}[Z,Y] \supseteq \cat{C}[Z,X\cap Y]$.
    Suppose there exists $\gamma\in \cat{C}[Z,X]\cap\cat{C}[Z,Y]$ with $\gamma\notin\cat{C}[Z,X\cap Y]$.
    Then $\gamma$ must pass through some $x\in X\backslash Y$ and some $y\in Y\backslash X$ but this would imply that $X$ and $Y$ are not jointly spacelike.
    Thus $\gamma$ cannot exist and it follows that $(X\oand Y)(Z) = \mathcal{P}(\cat{C}[Z,X\cap Y]) = \Slice(Z,X\cap Y) = \yo_{X\cap Y}(Z)$, noting that $X\cap Y$ is a slice because $X\cap Y\subseteq X$ and thus is an object of $\Slice$.
\end{proof}

In particular, the previous theorem shows that on jointly spacelike slices $\oand$ acts like intersection and we can make the identification $(X\oand Y)(-)\simeq X\cap Y$.
On the other hand, when the slices are not jointly spacelike there is no representative for $(X\oand Y)(-)$.
To show this we need the following lemma:

\begin{lem}\label{lem:tinycurves}
  Let $A\subseteq \mathcal{M}$ be a closed subset of $\mathcal{M}$.
  Then for any $x\in\mathcal{M}$, $x\notin A$, there exists a causal curve through $x$ which does not intersect $A$.
\end{lem}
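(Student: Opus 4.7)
The plan is to exploit the fact that $A^c$ is an open neighborhood of $x$ together with the local existence of timelike geodesics on a Lorentzian manifold. Since $A$ is closed and $x \notin A$, there is an open set $U \ni x$ with $U \cap A = \varnothing$. It then suffices to construct a causal curve through $x$ whose image is entirely contained in $U$; a sufficiently short segment of any smooth timelike curve emanating from $x$ will do.

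More concretely, I would proceed as follows. First, by time-orientability of $\mathcal{M}$, the tangent space $T_x\mathcal{M}$ contains a non-zero future-directed timelike vector $v$. Second, choose any smooth coordinate chart $\varphi : V \to \mathbb{R}^{n+1}$ about $x$ with $V \subseteq U$ (this is possible because manifold charts form a basis for the topology, and we can shrink any chart to land inside $U$). In these coordinates, consider the straight-line path $\mu(t) = \varphi^{-1}(\varphi(x) + t\,d\varphi_x(v))$, defined on a small interval $(-\delta, \delta)$ with $\delta$ small enough that $\mu(t) \in V$ for all $t$. By construction $\mu$ is smooth and regular with $\mu(0) = x$ and $\mu'(0) = v$; since $v$ is timelike and future-directed and the condition ``$\mu'(t)$ is timelike and future-directed'' is open in $t$, there is $0 < \epsilon \le \delta$ such that $\mu|_{(-\epsilon,\epsilon)}$ has timelike, future-directed tangent everywhere.

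This restricted $\mu$ is by definition a future-directed causal path through $x$, and its image lies in $V \subseteq U \subseteq A^c$, so it avoids $A$. The corresponding equivalence class under reparametrisation is the required causal curve. (One could equivalently use the exponential map and take $\mu(t) = \exp_x(tv)$, obtaining a timelike geodesic segment whose tangent at $t=0$ is $v$; continuity of $\exp_x$ and of the tangent along the geodesic then yields the same conclusion for $t$ in a small interval.)

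The only subtlety, and really the only nontrivial step, is ensuring that the tangent remains timelike and future-directed on a full neighborhood of $0$ in the parameter interval: this is where one uses continuity of $t \mapsto g(\mu'(t),\mu'(t))$ together with the assumed continuous splitting of timelike vectors into future- and past-directed classes. Everything else is just the fact that closed sets have open complements and that manifold charts form a neighborhood basis, so there is no obstruction to compressing the curve into $A^c$.
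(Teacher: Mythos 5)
Your proof is correct and follows essentially the same strategy as the paper's: use closedness of $A$ to get an open neighbourhood of $x$ disjoint from $A$, produce a causal curve through $x$ from the timelike vector field guaranteed by time-orientability, and shrink it until it lies inside that neighbourhood. You simply spell out the construction of the short causal segment (via a chart or the exponential map, and the openness of the timelike/future-directed condition) in more detail than the paper does.
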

\begin{proof}
  The timelike vector field is non-vanishing on $\mathcal{M}$ and as a result there must be a causal curve $\gamma$ through $x$.
  In a sufficiently small neighbourhood $U$ of $x$, $\gamma$ must restrict to a causal curve which is contained entirely within $U$.
  Since $A$ is closed and $\mathcal{M}$ is Hausdorff, this neighbourhood can be made sufficiently small such that $U\cap A=\varnothing$.
\end{proof}

\begin{thm}\label{thm:timelike_intersection}
  When $X$ and $Y$ are not jointly spacelike, the presheaf $(X\oand Y)(-)$ is not representable.
\end{thm}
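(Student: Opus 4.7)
The plan is to argue by contradiction: assume $(X \oand Y)(-) \cong \yo_W$ for some slice $W$ and derive an impossible configuration. By the Yoneda lemma any such isomorphism $\phi$ is determined by a universal element $\eta := \phi_W(1_W) \in (X \oand Y)(W) = \mathcal{P}(\cat{C}[W,X] \cap \cat{C}[W,Y])$, and the component at an arbitrary slice $Z$ acts as $\phi_Z(f) = \eta \cap f$ for $f \in \mathcal{P}(\cat{C}[Z,W])$. I will analyse separately what injectivity and surjectivity of $\phi_Z$ force, then use the failure of joint spacelikeness to produce a single offending curve.

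Step one is to pin down $W$. Injectivity of $\phi_W$ forces every curve in $1_W = \cat{C}[W]$ to lie in $\eta$, for otherwise a $\delta \in \cat{C}[W] \setminus \eta$ would give $\eta \cap \{\delta\} = \varnothing = \eta \cap \varnothing$. Thus $\cat{C}[W] \subseteq \eta \subseteq \cat{C}[W,X] \cap \cat{C}[W,Y]$. If some $w \in W$ failed to lie in $X$, Lemma~\ref{lem:tinycurves} applied to the closed set $A = X$ would produce a causal curve through $w$ disjoint from $X$, lying in $\cat{C}[W]$ but not in $\cat{C}[W,X]$ and contradicting this inclusion. Hence $W \subseteq X$, and symmetrically $W \subseteq Y$, so $W \subseteq X \cap Y$. (The case $W = \varnothing$ is immediately excluded, since $\yo_\varnothing(Z) = \{\varnothing\}$ is singleton for every $Z$, while $(X \oand Y)(\{x\})$ has at least two elements by the curve $\gamma$ constructed below.) Because $W \subseteq X \cap Y$, every curve through $W$ at a point $w$ automatically passes $W$ then $X$ and $W$ then $Y$ at $w$, so $\eta = \cat{C}[W]$ is forced and $\phi_Z$ reduces to the inclusion $\mathcal{P}(\cat{C}[Z,W]) \hookrightarrow \mathcal{P}(\cat{C}[Z,X] \cap \cat{C}[Z,Y])$. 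Surjectivity is then equivalent to the equality $\cat{C}[Z,X] \cap \cat{C}[Z,Y] = \cat{C}[Z,W]$ for every slice $Z$.

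Step two produces the contradiction. Since $X \cup Y$ is not spacelike while $X$ and $Y$ individually are, there exist (up to swapping $X$ and $Y$) points $x \in X$ and $y \in Y$ with $x \neq y$ and a future-directed causal curve $\gamma$ from $x$ to $y$. Taking the singleton slice $Z = \{x\}$, the curve $\gamma$ clearly lies in $\cat{C}[\{x\},X] \cap \cat{C}[\{x\},Y]$, so by the surjectivity condition also in $\cat{C}[\{x\},W]$. Hence $\gamma$ meets $W$ at some point $w$ with $x \prec w$ along $\gamma$, and since $\gamma$ terminates at $y$ also $w \prec y$. But $w \in W \subseteq X \cap Y$, so $x, w \in X$ with $x \prec w$ and the spacelikeness of $X$ force $x = w$; symmetrically $w = y$. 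Therefore $x = y$, contradicting the choice $x \neq y$.

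The main obstacle is the first step: translating the Yoneda-style isomorphism into a geometric constraint. Injectivity alone is a thin condition, and it takes its combination with Lemma~\ref{lem:tinycurves} (specialised at $Z = W$) to extract the structural conclusion $W \subseteq X \cap Y$ and the identification $\eta = \cat{C}[W]$. Once those are in hand, surjectivity becomes a set-theoretic containment of curve families, which the single curve $\gamma$ furnished by the failure of joint spacelikeness violates.
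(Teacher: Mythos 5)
Your proof is correct and follows essentially the same route as the paper's: use Lemma~\ref{lem:tinycurves} to force the would-be representing slice into $X\cap Y$, then let a causal curve witnessing the failure of joint spacelikeness contradict spacelikeness of $X$ and $Y$. The only real difference is presentational — you unpack the isomorphism via its Yoneda universal element and treat injectivity and surjectivity separately (and use the probe $\{x\}$ rather than $X$), which is a slightly more careful rendering of the identification the paper makes implicitly by writing the isomorphism as an equality of sets.
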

\begin{proof}
  We make much use of Lemma \ref{lem:tinycurves}.
  Suppose $X$ and $Y$ are not jointly spacelike and suppose for a contradiction that $(X\oand Y)(-) = \Slice(-,Z)$ for some slice $Z$.

  Now suppose there exists a $z\in Z$ such that $z\notin X\cup Y$.
  We can find a causal curve $\gamma$ through $z$ that does not also pass through $X \cup Y$.
  It follows that $\gamma \in \Slice(Z,Z)$, but $\gamma\notin(X\oand Y)(Z)$.
  So $Z$ cannot represent the presheaf and we conclude $Z\subseteq X\cup Y$.

  Now take a $x\in X\backslash Y$.
  There exists a causal curve $\gamma$ passing through $x$ but not $Y$.
  Suppose that $x\in Z$, then $\gamma \in \Slice(Z,Z)$, but $\gamma\notin(X\oand Y)(Z)$.
  So $x\notin Z$.

  A similar argument shows that any $y\in Y\backslash X$ cannot be in $Z$ and thus $Z \subseteq X\cap Y$.

  Since $X$ and $Y$ are not jointly spacelike, $X\cup Y$ is not spacelike and there exists a causal curve $\gamma$ from $X\cup Y$ to itself.
  In particular $\gamma$ must pass through a point of $X$ and a point of $Y$, and not, say, through two points of $X$, since $X$ and $Y$ are slices.
  Then we would have $\gamma\in (X\oand Y)(X)$ but $\gamma\notin \Slice(X,Z)$ because if $\gamma\in \Slice(X,Z)$ it would pass through $X$ and $X\cap Y\subseteq X$, a contradiction with $X$ being a slice.
\end{proof}

So we have shown that $(X\oand Y)(-)$ is representable if and only if $X$ and $Y$ are jointly spacelike.
Note that one \textbf{cannot} define a partially monoidal category by just working with $\oand$ where it is representable because the unit presheaf is not representable (the whole manifold is not a slice) and therefore there is no unit object available in $\Slice$.

\section{The Structure of \sf Slice \bf and \sf Space \bf under Union}\label{sec:slice_union}
Let us now consider the structure of $\Slice$ and $\Space$ under union of slices and sets of curves.
The larger category $\Space$ where the objects are arbitrary subsets of the manifold $\mathcal{M}$ and the homsets are powersets of causal curves is a premonoidal category:
\begin{prop}
  $\Space$ is a strict premonoidal category under the operation of taking the union of regions and curves.
\end{prop}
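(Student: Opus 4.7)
The plan is to unpack the premonoidal data and verify strictness piece by piece. On objects I set $X \boxtimes Y := X \cup Y$ and take the unit to be the empty region $\varnothing$ (a subset of $\mathcal{M}$ with $\cat{C}[\varnothing,\varnothing]=\varnothing$). On morphisms the left and right tensor functors are defined by adjoining identity curves,
\[
  X \rtimes g \;:=\; \cat{C}[X,X] \cup g, \qquad f \ltimes Y \;:=\; f \cup \cat{C}[Y,Y],
\]
viewed as subsets of the appropriate homset. Compatibility on objects is immediate: $X \rtimes Y = X \cup Y = X \ltimes Y$ by commutativity of union.

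The crux is functoriality of $X \rtimes -$ (the argument for $-\ltimes Y$ is symmetric). For identities I would use that a causal curve passes through $X \cup A$ iff it passes through $X$ or through $A$, so $\cat{C}[X,X]\cup\cat{C}[A,A] = \cat{C}[X\cup A, X\cup A]=1_{X\cup A}$. Preservation of composition then reduces to distributivity of $\cup$ over $\cap$: for $g:A\to B$ and $g':B\to C$,
\[
  (X\rtimes g')\circ(X\rtimes g) \;=\; (\cat{C}[X,X]\cup g')\cap(\cat{C}[X,X]\cup g) \;=\; \cat{C}[X,X]\cup (g'\cap g) \;=\; X\rtimes(g'\circ g).
\]

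Strictness is then almost automatic: $\varnothing\cup X = X$ on objects and $\cat{C}[\varnothing,\varnothing]\cup g = g$ on morphisms give identity unitors, while associativity $(X\cup Y)\cup Z = X\cup(Y\cup Z)$ is strict both for objects and, by the same identity/distributivity observations, for the iterated side tensors (e.g.\ $X\rtimes(Y\rtimes g) = (X\cup Y)\rtimes g$ and $(X\rtimes g)\ltimes Z = X\rtimes(g\ltimes Z)$). The coherence isomorphisms can therefore be taken to be identities, which are automatically central, and the pentagon, triangle, and naturality squares collapse to tautologies.

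The main obstacle is bookkeeping rather than conceptual: one must check carefully that $\cat{C}[X,X]\cup g$ really lies in $\cat{C}[X\cup A, X\cup B]$ (and similarly on the right) and that the distributive rearrangement used for functoriality is a genuine set-theoretic equality inside each relevant homset. What is \emph{not} needed here — and what forces the premonoidal rather than monoidal weakening — is any interchange law: as the previous proposition already shows, the causal structure of $\mathcal{M}$ orders curves in $S$ and $T$ in non-symmetric ways, so one expects $(1\otimes g)\circ(f\otimes 1)$ and $(f\otimes 1)\circ(1\otimes g)$ to be genuinely distinct, which is exactly what the premonoidal axioms permit.
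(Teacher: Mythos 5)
Your proof is correct and follows essentially the same route as the paper's: define $X\rtimes g$ and $f\ltimes Y$ by adjoining the identity set of curves $\cat{C}[X]=\cat{C}[X,X]$, verify functoriality via the observation that $\cat{C}[X]\cup\cat{C}[A]=\cat{C}[X\cup A]$ together with distributivity of union over intersection, and take the unit $\varnothing$ with identity (hence central) coherence isomorphisms. The typing bookkeeping you flag is glossed over in the paper's proof as well, so you are at the same level of rigour.
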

\begin{proof}
  For objects $X$ and $Y$ assign them the object $X\otimes Y:= X\cup Y$.
  The assignment $(T:Y\morph{}Y') \mapsto (\cat{C}[X]\cup T: X\cup Y\morph{} X\cup Y')$ gives a functor $X\rtimes-:\cat{C}\morph{}\cat{C}$ because
  \begin{equation*}
    X\rtimes 1_Y = \cat{C}[X]\cup\cat{C}[Y] = \cat{C}[X\cup Y] = 1_{X\cup Y}
  \end{equation*}
  \begin{equation*}
    (X\rtimes f')(X\rtimes f) = (\cat{C}[X]\cup f')\cap (\cat{C}[X]\cup f) = \cat{C}[X] \cup (f'\cap f) = X\rtimes f'f
  \end{equation*}
  Similarly the assignment $(S:X\morph{}X') \mapsto (S\cup \cat{C}[Y])$ extends to a functor $-\ltimes Y:\cat{C}\morph{}\cat{C}$.
  The unit object is $I := \varnothing$ and the unit and associativity isomorphisms are identities, which it is straightforward to check are central.
\end{proof}

The above has a clear issue - $X\cup Y$ is generally not another slice and thus not an object of $\Slice$.
This means $\Slice$ cannot form a premonoidal category under union and we need to search for something that combines both premonoidal and promonoidal structures together.

There is no obstacle to defining presheaves $(X\oor Y)(-):\opcat{\Slice}\morph{}\set$ which send a slice $Z$ to the powerset of causal curves through $Z$ and either $X$ or $Y$:
\begin{equation*}
  (X\oor Y)(Z) := \mathcal{P}(\cat{C}[Z,X]\cup\cat{C}[Z,Y])
\end{equation*}
On morphisms $S:Z'\morph{}Z$ this presheaf acts by intersection:
\begin{equation*}
  (X\oor Y)(S) :(X\oor Y)(Z) \morph{} (X\oor Y)(Z') :: C\mapsto C\cap S
\end{equation*}

\begin{lem}\label{prop:or1}
  $(X\oor Y)(-)$ is a presheaf.
\end{lem}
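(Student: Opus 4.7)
The proof will mirror the structure of the proof that $(X\oand Y)(-)$ is a presheaf, with the only substantive difference occurring in the verification that the action on morphisms is well-defined; the rest is an essentially formal book-keeping argument using that composition in $\Slice$ is set-theoretic intersection and that intersection is associative.

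First I would check that the action on morphisms lands in the right place, i.e. that for $S:Z'\morph{}Z$ and $C \in (X\oor Y)(Z)$, the set $C\cap S$ really is an element of $(X\oor Y)(Z')=\mathcal{P}(\cat{C}[Z',X]\cup\cat{C}[Z',Y])$. For any $\gamma\in C\cap S$, the hypothesis $S\subseteq \cat{C}[Z',Z]$ means $\gamma$ passes through $Z'$ then $Z$, while $C\subseteq \cat{C}[Z,X]\cup\cat{C}[Z,Y]$ means $\gamma$ passes through $Z$ then $X$, or through $Z$ then $Y$. Chaining these observations shows that $\gamma$ passes through $Z'$ then $X$, or through $Z'$ then $Y$; hence $\gamma\in \cat{C}[Z',X]\cup\cat{C}[Z',Y]$, as required.

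Next I would verify preservation of identities: $(X\oor Y)(1_Z)(C) = C \cap \cat{C}[Z,Z]$, and since every curve in $C\subseteq \cat{C}[Z,X]\cup\cat{C}[Z,Y]$ already passes through $Z$, this intersection is $C$, so $(X\oor Y)(1_Z) = 1_{(X\oor Y)(Z)}$.

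Finally, for contravariant functoriality, take $T:Z''\morph{}Z'$ and $S:Z'\morph{}Z$. Recall that composition in $\Slice$ is intersection, so $S\circ T = S\cap T$. Then $(X\oor Y)(S\circ T)(C) = C\cap(S\cap T)$, whereas $\bigl((X\oor Y)(T)\circ(X\oor Y)(S)\bigr)(C) = (C\cap S)\cap T$, and these coincide by associativity of intersection. The only step requiring any physical input is the first one, but even there the argument reduces to a direct unpacking of the definitions of $\cat{C}[-,-]$ and of composition in $\Slice$, so I expect no real obstacle.
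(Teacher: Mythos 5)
Your proof is correct and follows exactly the route the paper intends (the paper omits the proof of this lemma as being the same argument as for $(X\oand Y)(-)$: identities are preserved because every curve in $(X\oor Y)(Z)$ already passes through $Z$, and contravariant functoriality is associativity of intersection). Your additional check that $C\cap S$ actually lands in $\mathcal{P}(\cat{C}[Z',X]\cup\cat{C}[Z',Y])$ — via the transitivity of ``passes through $A$ then $B$'' applied separately to the $X$- and $Y$-cases of the union — is a welcome bit of extra care that the paper leaves implicit.
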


Similarly, there is no obstacle to defining natural transformations acting on either the left or right of $\oor$.
For $S:X\morph{}X'$ there is a natural transformation with components
\begin{equation*}
  (S \oor Y)_Z :(X\oor Y)(Z)\morph{} (X'\oor Y)(Z) :: C\mapsto C\cap (S\cup \cat{C}[Y])
\end{equation*}
and for $T:Y\morph{}Y'$ there is a natural transformation with components
\begin{equation*}
  (X \oor T)_Z :(X\oor Y)(Z)\morph{} (X\oor Y')(Z) :: C\mapsto C\cap (\cat{C}[X] \cup T)
\end{equation*}

\begin{lem}\label{prop:or2}
  $(S \oor Y)$ and $(X \oor T)$ are natural transformations.
\end{lem}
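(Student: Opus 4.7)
The plan is to verify, for each of the families $(S \oor Y)$ and $(X \oor T)$, two things: that each component actually lands in the asserted codomain presheaf, and that the naturality square commutes for every morphism $R: Z' \morph{} Z$ in $\Slice$.

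For well-definedness of $(S \oor Y)_Z$ with $S: X \morph{} X'$, I take $C \in (X \oor Y)(Z)$, so $C \subseteq \cat{C}[Z,X] \cup \cat{C}[Z,Y]$, and distribute intersection over union to obtain
\[
C \cap (S \cup \cat{C}[Y]) \;=\; (C \cap S) \cup (C \cap \cat{C}[Y]).
\]
A case analysis on each summand, using the transitivity that a curve passing through $Z$ then $X$ followed by $X$ then $X'$ is itself a curve through $Z$ then $X'$, shows that the result lies in $\cat{C}[Z,X'] \cup \cat{C}[Z,Y] = (X' \oor Y)(Z)$, as required. The verification for $(X \oor T)_Z$ is symmetric, with $\cat{C}[Y]$ replaced by $\cat{C}[X]$ and the roles of the two arguments swapped.

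For naturality, I chase an arbitrary $C \in (X \oor Y)(Z)$ around the naturality square associated to $R: Z' \morph{} Z$. Since both the presheaf functorial action $(X \oor Y)(R)$ and the natural-transformation component $(S \oor Y)_Z$ are defined via intersection, the two routes yield
\[
(C \cap (S \cup \cat{C}[Y])) \cap R \quad\text{and}\quad (C \cap R) \cap (S \cup \cat{C}[Y]),
\]
which coincide by associativity and commutativity of set intersection. The argument for $(X \oor T)$ is entirely symmetric, with $S \cup \cat{C}[Y]$ replaced by $\cat{C}[X] \cup T$.

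The main step requiring genuine attention is the codomain check rather than naturality; commutativity of the square is purely formal once one observes that every operation in sight is an intersection, exactly paralleling the situation in Lemma~\ref{prop:oand_nat}. The additional union with $\cat{C}[Y]$ or $\cat{C}[X]$, forced by the disjunctive form of the $\oor$ presheaf, does not interact with $R$ (which enters only through the functoriality of the presheaf) and so does not obstruct naturality; the substantive difference from the $\oand$ case therefore shows up only in the codomain verification, where one genuinely needs to unpack the union of the two cases.
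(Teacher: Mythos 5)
Your naturality argument is fine and is exactly the paper's treatment of the analogous statement for $\oand$ (Lemma~\ref{prop:oand_nat}); the paper in fact states this lemma without proof, so the question is only whether your argument closes on its own terms. The genuine gap is in the step you yourself single out as the substantive one, the codomain check. The summand $C\cap S$ is handled correctly by your transitivity observation (together with the remark that a curve lying in $C$ via the $\cat{C}[Z,Y]$ disjunct remains in $\cat{C}[Z,Y]$). But the summand $C\cap\cat{C}[Y]$ does not land in $\cat{C}[Z,X']\cup\cat{C}[Z,Y]$ in general, and no case analysis will show that it does: $\cat{C}[Y]=\cat{C}[Y,Y]$ is the set of curves that merely touch $Y$ somewhere, with no ordering constraint relative to $Z$. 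Concretely, in $1+1$ Minkowski space take the slices $Y=\{(0,0)\}$, $Z=\{(1,0)\}$, $X=\{(2,0)\}$, let $X'$ be any slice disjoint from the worldline $\gamma: t\mapsto (t,0)$, and set $C=\{\gamma\}$ and $S=\varnothing\subseteq\cat{C}[X,X']$. Then $\gamma\in\cat{C}[Z,X]$, so $C\in(X\oor Y)(Z)$, and $\gamma\in\cat{C}[Y]$, so $\gamma\in C\cap(S\cup\cat{C}[Y])$; but $\gamma$ meets $Y$ only at $t=0$, strictly before it meets $Z$, so $\gamma\notin\cat{C}[Z,Y]$, and $\gamma$ never meets $X'$, so $\gamma\notin\cat{C}[Z,X']$. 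Hence $(S\oor Y)_Z(C)$ is not an element of $(X'\oor Y)(Z)=\mathcal{P}(\cat{C}[Z,X']\cup\cat{C}[Z,Y])$.

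To be fair, this difficulty originates in the definition of the components as much as in your write-up: as literally defined, $C\mapsto C\cap(S\cup\cat{C}[Y])$ does not take values in the stated codomain, and some repair (for instance replacing $\cat{C}[Y]$ by a $Z$-relative set of curves) seems to be needed before the lemma can be proved at all; note that any such repair must then be re-examined against the naturality square and against the functoriality computation in Lemma~\ref{prop:oor_functor}, which uses the specific form $S\cup\cat{C}[Y]$ to distribute intersections. As written, however, your claim that ``a case analysis on each summand shows that the result lies in $\cat{C}[Z,X']\cup\cat{C}[Z,Y]$'' is not justified for the $C\cap\cat{C}[Y]$ summand, and the well-definedness of the components --- which you correctly identify as the part requiring genuine attention --- is exactly where the argument fails.
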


What fails in comparison to $\oand$ is that, in general, the components of these natural transformations do not obey the interchange law, so we cannot hope that these data give a functor $\Slice\times\Slice\morph{}[\opcat{\Slice},\set]$.
Nevertheless, the natural transformations are functorial on each side of the tensor and it is easy to verify that the assignment does give a functor $\oor:\Slice\funny\Slice\morph{}[\opcat{\Slice},\set]$ where $\funny$ is the funny tensor product of categories.

\begin{lem}\label{prop:oor_functor}
  The data of Lemmas \ref{prop:or1} and \ref{prop:or2} specify a functor $\Slice\funny\Slice\morph{}[\opcat{\Slice},\set]$
\end{lem}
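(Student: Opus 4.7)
The plan is to invoke the universal property of the pushout defining the funny tensor product. By diagram \eqref{funnytensor}, a functor out of $\Slice \funny \Slice$ is determined by a pair of functors $F_L : \Slice \times \Slice_0 \morph{} [\opcat{\Slice},\set]$ and $F_R : \Slice_0 \times \Slice \morph{} [\opcat{\Slice},\set]$ that agree on the common restriction to $\Slice_0 \times \Slice_0$. So first I would define these two functors, then verify the agreement, and finally appeal to the universal property of the pushout.

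On objects, both functors act by $(X,Y) \mapsto (X \oor Y)(-)$, which is a presheaf by Lemma \ref{prop:or1}. For $F_L$, a morphism $(S : X \morph{} X', 1_Y)$ is sent to the natural transformation $S \oor Y$ from Lemma \ref{prop:or2}; $F_R$ is defined symmetrically using $X \oor T$. Both visibly agree on the discrete subcategory $\Slice_0 \times \Slice_0$, sending identity pairs to identity natural transformations. The nontrivial content is verifying that $F_L$ (and symmetrically $F_R$) is actually a functor on its non-discrete factor. Preservation of identities is immediate: $(1_X \oor Y)_Z$ sends $C \in (X \oor Y)(Z)$ to $C \cap (\cat{C}[X,X] \cup \cat{C}[Y])$, and since $\cat{C}[Z,X] \cup \cat{C}[Z,Y] \subseteq \cat{C}[X,X] \cup \cat{C}[Y]$, this intersection is just $C$. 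For composition, given $S : X \morph{} X'$ and $S' : X' \morph{} X''$, I would compute
\begin{equation*}
  (S' \oor Y)_Z \circ (S \oor Y)_Z(C) \;=\; C \cap (S \cup \cat{C}[Y]) \cap (S' \cup \cat{C}[Y]) \;=\; C \cap ((S \cap S') \cup \cat{C}[Y])
\end{equation*}
using distributivity of union over intersection, and observe that the right-hand side is exactly $((S' \circ S) \oor Y)_Z(C)$, since composition in $\Slice$ is set-theoretic intersection.

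With $F_L$ and $F_R$ in place and agreeing on the pushout corner $\Slice_0 \times \Slice_0$, the universal property of \eqref{funnytensor} yields a unique functor $\oor : \Slice \funny \Slice \morph{} [\opcat{\Slice},\set]$ whose restrictions recover $F_L$ and $F_R$, as required. The main obstacle is essentially just the distributivity bookkeeping in the composition calculation above; crucially, there is no interchange law to verify, which is precisely the point of working with the funny tensor product rather than $\Slice \times \Slice$.
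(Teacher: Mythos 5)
Your proposal is correct and follows essentially the same route as the paper: the core content in both is the distributivity computation $C\cap(S\cup\cat{C}[Y])\cap(S'\cup\cat{C}[Y]) = C\cap((S\cap S')\cup\cat{C}[Y])$ establishing functoriality in each argument separately. You merely make explicit, via the pushout universal property of \eqref{funnytensor}, the step the paper summarises as ``this is enough to extend to functoriality from the funny tensor,'' which is a welcome but not substantively different elaboration.
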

\begin{proof}
  Proof given in appendix \ref{proof:oor_functor}.
\end{proof}

In this way $\Slice$ seems to combine both the structures of premonoidal and promonoidal categories.
We leave it as future work to make rigorous the associativity and unitality of this structure but we note that the representable presheaf at the empty slice $\yo_\varnothing$ is likely the unit of a suitably defined structure.

Similarly to the intersection case we can study when the presheaves $(X\oor Y)(-)$ are representable:

\begin{thm}
  When $X$ and $Y$ are jointly spacelike, the presheaf $(X\oor Y)(-)$ is representable.
\end{thm}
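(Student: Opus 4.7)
The plan is to exhibit a natural isomorphism $(X\oor Y)(-) \cong \yo_{X\cup Y}(-)$. First, observe that $X\cup Y$ is actually a slice when $X$ and $Y$ are jointly spacelike: it is spacelike by the definition of jointly spacelike, and closed as the union of two closed sets, so it is a genuine object of $\Slice$. The key step is then to establish the set equality
\[
  \cat{C}[Z,X] \cup \cat{C}[Z,Y] \;=\; \cat{C}[Z, X\cup Y]
\]
for every slice $Z$. Once this holds, taking powersets gives $(X\oor Y)(Z) = \Slice(Z, X\cup Y)$ on objects, and naturality in $Z$ is automatic because both sides act on a morphism $S:Z'\morph{}Z$ by the same formula $C\mapsto C\cap S$ (the former by the definition of $(X\oor Y)$, the latter because composition in $\Slice$ is intersection of sets of curves).

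The $\supseteq$ inclusion is essentially definitional. Given $\gamma \in \cat{C}[Z, X\cup Y]$, pick a parameter $q_0$ with $\mu(q_0)\in X\cup Y$ and assume without loss of generality $\mu(q_0)\in X$; then every $q$ with $\mu(q)\in X$ satisfies $\mu(q)\in X\cup Y$, so the hypothesis supplies some $p\le q$ with $\mu(p)\in Z$. Hence $\gamma\in\cat{C}[Z,X]$ (and analogously into $\cat{C}[Z,Y]$ if instead $\mu(q_0)\in Y$). Notably, this direction holds for any pair of slices, not just the jointly spacelike ones.

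The $\subseteq$ inclusion is where the joint spacelike assumption is used, and it is the main obstacle. Suppose $\gamma\in\cat{C}[Z,X]$ (the $Y$ case is symmetric); I must check that for every $q$ with $\mu(q)\in X\cup Y$ there exists $p\le q$ with $\mu(p)\in Z$. If $\mu(q)\in X$ the required $p$ is supplied by hypothesis. If $\mu(q)\in Y\setminus X$, I would derive a contradiction: taking any $q_0$ with $\mu(q_0)\in X$, we have $\mu(q_0)\ne\mu(q)$, and the restriction of $\gamma$ between $\min\{q,q_0\}$ and $\max\{q,q_0\}$ is a future-directed causal curve witnessing $\mu(\min)\prec \mu(\max)$ for two distinct points of the spacelike set $X\cup Y$, contradicting spacelikeness. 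So this case is vacuous, and $\gamma\in\cat{C}[Z,X\cup Y]$, completing the proof.
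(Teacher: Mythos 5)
Your proof is correct and follows the same route as the paper, which simply asserts the set identity $\cat{C}[Z,X]\cup\cat{C}[Z,Y]=\cat{C}[Z,X\cup Y]$ and the fact that $X\cup Y$ is a slice; you have merely filled in the details of both claims, and your case analysis (in particular, ruling out curves through both $X$ and $Y\setminus X$ via spacelikeness of $X\cup Y$) is sound.
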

\begin{proof}
  Suppose $X$ and $Y$ are jointly spacelike.
  Then $(X\oor Y)(Z) = \mathcal{P}(\cat{C}[Z,X]\cup\cat{C}[Z,Y]) = \mathcal{P}(\cat{C}[Z,X\cup Y]) = \yo_{X\cup Y}(Z)$ where we have used the fact that $X\cup Y$ is spacelike and thus an object of $\Slice$.
\end{proof}

\begin{thm}
  When $X$ and $Y$ are not jointly spacelike, the presheaf $(X\oor Y)(-)$ is not representable.
\end{thm}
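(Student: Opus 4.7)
The plan is to argue by contradiction: assume $(X\oor Y)(-)\cong\yo_W$ for some slice $W$, and deduce $X\cup Y\subseteq W$. Since subsets of spacelike sets are spacelike, $X\cup Y$ would then be spacelike, contradicting the hypothesis that $X$ and $Y$ are not jointly spacelike.

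First I would pin down the natural iso via Yoneda: it is determined by a single element $e\in (X\oor Y)(W)=\mathcal{P}(\cat{C}[W,X]\cup\cat{C}[W,Y])$, and $\eta_Z(f)=e\cap f$ for every $f\subseteq\cat{C}[Z,W]$. Specialising at $Z=W$, the map $f\mapsto e\cap f$ must biject $\mathcal{P}(\cat{C}[W])$ with $\mathcal{P}(\cat{C}[W,X]\cup\cat{C}[W,Y])$. Applying surjectivity to each singleton $\{\gamma\}$ with $\gamma\in\cat{C}[W,X]\cup\cat{C}[W,Y]$ forces $e\supseteq\cat{C}[W,X]\cup\cat{C}[W,Y]$, and the reverse containment is automatic, so $e=\cat{C}[W,X]\cup\cat{C}[W,Y]$; meanwhile, injectivity applied to the pair $\cat{C}[W]\setminus e$ and $\emptyset$ forces $e=\cat{C}[W]$. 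Hence $\cat{C}[W]=\cat{C}[W,X]\cup\cat{C}[W,Y]$. At a general slice $Z$ every $f\subseteq\cat{C}[Z,W]$ already lies in $\cat{C}[W]=e$, so $\eta_Z(f)=f$ and bijectivity collapses to the pointwise set equality $\cat{C}[Z,W]=\cat{C}[Z,X]\cup\cat{C}[Z,Y]$ for every slice $Z$.

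Next I would specialise to the singleton slice $Z=\{x\}$ for $x\in X$. Since $X$ is spacelike, any future-directed causal curve through $x$ can only meet $X$ at $x$, so $\cat{C}[x,X]=\cat{C}[x]$; combined with $\cat{C}[x,Y]\subseteq\cat{C}[x]$, the equality gives $\cat{C}[x,W]=\cat{C}[x]$, i.e.\ every future-directed causal curve through $x$ must also pass through $W$. If $x\notin W$, then Lemma~\ref{lem:tinycurves} applied to the closed set $W$ yields a future-directed causal curve through $x$ avoiding $W$, contradicting $\cat{C}[x,W]=\cat{C}[x]$. Hence $X\subseteq W$, symmetrically $Y\subseteq W$, and so $X\cup Y\subseteq W$, delivering the contradiction.

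The main subtlety is the first step: rather than settle for an abstract bijection of homsets, one must exploit that the Yoneda witness of the iso is a single element $e$ acting uniformly by intersection. This pins $e$ down to $\cat{C}[W]$ and yields the strong pointwise equality $\cat{C}[Z,W]=\cat{C}[Z,X]\cup\cat{C}[Z,Y]$, after which the remainder of the argument closely mirrors the contradiction used in the proof of Theorem~\ref{thm:timelike_intersection}.
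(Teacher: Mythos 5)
Your proof is correct, and it reaches the contradiction from the opposite side to the paper's. The paper assumes a representing slice $Z$, shows $Z\subseteq X\cup Y$ (by the probing argument recycled from the intersection case), then shows that the two causally related points $x\in X\setminus Y$ and $y\in Y\setminus X$ must both lie in $Z$, so that $Z$ itself fails to be spacelike; this requires splitting the violating curve $\gamma$ into sub-curves $\gamma_x$ and $\gamma_y$. You instead establish the reverse inclusion $X\cup Y\subseteq W$, using singleton slices $\{x\}$ as probes together with Lemma~\ref{lem:tinycurves}, and conclude that $X\cup Y$ inherits spacelikeness from $W$, contradicting the hypothesis directly. Two features of your argument are worth highlighting. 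First, your Yoneda step --- pinning the isomorphism down to $f\mapsto e\cap f$ for a fixed $e\in(X\oor Y)(W)$ and deducing the literal set equality $\cat{C}[Z,W]=\cat{C}[Z,X]\cup\cat{C}[Z,Y]$ --- is more careful than the paper, which writes the representability assumption as an equality of presheaves and element-chases accordingly; a bare bijection of powersets would not justify that, so your use of naturality is the right way to make the step rigorous. Second, your use of singletons requires the (easy but worth stating) observation that $\{x\}$ is closed and vacuously spacelike, hence an object of $\Slice$; the paper avoids this extra ingredient by probing at $X$ and $Y$ themselves. Both routes rest on the same lemma and are comparable in length; yours buys a cleaner endgame (no decomposition of $\gamma$) at the cost of the singleton-slice observation.
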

\begin{proof}
  We make use of Lemma \ref{lem:tinycurves}.
  Suppose $X$ and $Y$ are not jointly spacelike and suppose for a contradiction that $(X\oor Y)(-)=\Slice(-,Z)$ for some slice $Z$.
  By the same argument made in the proof of Theorem \ref{thm:timelike_intersection} we must have $Z\subseteq X\cup Y$.

  Since $X$ and $Y$ are not jointly spacelike, $X\cup Y$ is not spacelike and thus there exists a causal curve $\gamma$ connecting two points of $X\cup Y$.
  It must be the case that one of these points is in $X\backslash Y$ and the other in $Y\backslash X$ else $X$ or $Y$ could not be slices.
  Write $x\in X\backslash Y$ and $y\in Y\backslash X$ for these points that $\gamma$ passes through and note that they can be the only points of $X\cup Y$ that $\gamma$ intersects else $X$ or $Y$ could not be slices.

  Now note that $\gamma$ restricts to a causal curve $\gamma_x$ which passes through $x$ but not $y$ and similarly a causal curve $\gamma_y$ which passes through $y$ but not $x$.

  Suppose that $x\notin Z$, then $\gamma_x\in(X\oor Y)(X)$ but $\gamma_x\notin \Slice(X,Z)$, noting that $Z\subseteq X\cup Y$ so that $\gamma_x$ intersects $Z$ at only $x$.
  So we conclude that $x\in Z$.

  Similarly, suppose that $y\notin Z$, then $\gamma_y\in(X\oor Y)(Y)$ but $\gamma_y\notin \Slice(Y,Z)$.
  So we conclude that $y\in Z$.

  We see that $\gamma$ is a causal curve connecting two distinct points of $Z$ and consequently $Z$ cannot be a slice.
\end{proof}

So we have shown that the presheaf $(X\oor Y)(-)$ is representable if and only if $X$ and $Y$ are jointly spacelike.
By restricting $\oor$ to these slices we can recover a partial premonoidal structure on $\Slice$ by defining the tensor to be given by the representative.
The unit of this partial premonoidal category is the empty slice $\varnothing$.

Now that we have two tensor-like structures on $\Slice$ we would like to know how they interact.
Given that $\oor$ behaves like union and $\oand$ like intersection, it seems reasonable to expect some sort of distributivity between them.
To understand this at the level of the profunctors we require the following definition:
\begin{defn}[Multiplicative Kernel \cite{day_fourier}]
  Let $(\cat{C},P,I)$ and $(\cat{D},Q,J)$ be promonoidal categories.
  A multiplicative kernel is a profunctor $K:\cat{C}\pmorph\cat{D}$ such that
  \begin{equation*}
    Q(K\times K) \cong KP \hspace{2cm} KI \cong J
  \end{equation*}
  where concatenation is profunctor composition.
\end{defn}

\begin{remark}
  Viewing $\cat{C}$ and $\cat{D}$ as pseudomonoids in $\Prof$, a multiplicative kernel is a homomorphism of these monoids.
\end{remark}

Each slice $X$ determines an endoprofunctor $(X\oor -)(-):\Slice\pmorph \Slice$ and it is the case that each of these is a multiplicative kernel for $\Slice$ equipped with $\oand$.

\begin{thm}\label{thm:kernel}
  For every slice $X$, $(X\oor -)(-)$ is a multiplicative kernel for $(\Slice,\oand)$.
\end{thm}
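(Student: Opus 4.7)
My plan is to verify the two defining axioms of a multiplicative kernel, namely $\oand(K\times K)\cong K\oand$ and $KI\cong I$, where $K=(X\oor-)(-)$. In both cases, I would unfold profunctor composition as a coend and exhibit an explicit universal cowedge landing in a concrete powerset of causal curves; the combinatorial heart will be the set-theoretic distributive law of union over intersection.

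For the multiplicativity axiom, profunctor composition gives
\[
(K\oand)(Z;Y_1,Y_2) \;=\; \int^W (X\oor W)(Z)\times(Y_1\oand Y_2)(W),
\]
\[
\oand(K\times K)(Z;Y_1,Y_2) \;=\; \int^{W_1,W_2} (W_1\oand W_2)(Z)\times(X\oor Y_1)(W_1)\times(X\oor Y_2)(W_2).
\]
I would show both are naturally isomorphic to the presheaf
\[
(Z;Y_1,Y_2)\;\mapsto\;\mathcal{P}\bigl(\cat{C}[Z,X]\cup(\cat{C}[Z,Y_1]\cap\cat{C}[Z,Y_2])\bigr),
\]
which equals $\mathcal{P}\bigl((\cat{C}[Z,X]\cup\cat{C}[Z,Y_1])\cap(\cat{C}[Z,X]\cup\cat{C}[Z,Y_2])\bigr)$ by distributivity. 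For the right-hand coend, the candidate cowedge sends $(S,T)$ to $(S\cap\cat{C}[Z,X])\cup(S\cap\cat{C}[Z,W]\cap T)$: the first piece collects curves of $S$ reaching $X$ directly, the second collects curves passing from $Z$ through the intermediate slice $W$ and on to both $Y_1$ and $Y_2$, whose membership in $\cat{C}[Z,Y_1]\cap\cat{C}[Z,Y_2]$ follows from concatenation of causal precedence. Compatibility with the sliding relation for $f\colon W\to W'$ is straightforward intersection/union bookkeeping; an entirely parallel three-way analysis (direct-to-$X$ through $W_1$ or $W_2$; routed through $W_1$ to $Y_1$ and through $W_2$ to $Y_2$) handles the double coend on the other side.

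For the unit axiom, the coend $KI(Z)=\int^W (X\oor W)(Z)\times\mathcal{P}(\cat{C}[W])$ is handled analogously: the cowedge $(S,C)\mapsto(S\cap\cat{C}[Z,X])\cup(S\cap\cat{C}[Z,W]\cap C)$ lands in $\mathcal{P}(\cat{C}[Z])=I(Z)$, using $\cat{C}[Z,X]\subseteq\cat{C}[Z]$, and the sliding relation again collapses the dependence on $W$.

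The principal obstacle will be verifying universality of these cowedges. Surjectivity onto the target powerset requires that every subset of the target curve-set arises from some $(S,T)$ at some choice of $W$: given a target subset containing routed curves, one can pick $W$ to be a suitably small spacelike slice hitting chosen intermediate points of those curves---an argument reminiscent of Lemma~\ref{lem:tinycurves}---and assemble $S$ and $T$ from there. Injectivity requires the sliding relations to identify precisely those pairs producing the same output set, which is where the careful bookkeeping concentrates. Once this combinatorial content is in place, the distributive law closes the argument.
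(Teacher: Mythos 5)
Your overall architecture matches the paper's: unfold both $\oand(K\times K)$ and $K\oand$ as coends, identify each with the concrete presheaf $\mathcal{P}\bigl(\cat{C}[Z,X]\cup(\cat{C}[Z,Y_1]\cap\cat{C}[Z,Y_2])\bigr)$, and let the distributivity of $\cup$ over $\cap$ carry the combinatorial content. Your cowedge $(S,T)\mapsto(S\cap\cat{C}[Z,X])\cup(S\cap\cat{C}[Z,W]\cap T)$ is, up to the identity $S=(S\cap\cat{C}[Z,X])\cup(S\cap\cat{C}[Z,W])$, essentially the paper's map $(S,T)\mapsto S\cap(T\cup\cat{C}[X])$, and your treatment of the unit axiom is likewise the paper's.

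The genuine gap is in how you propose to prove the induced map out of the coend is an isomorphism. Your surjectivity plan --- choose a ``suitably small'' intermediate slice $W$ hitting chosen points of the routed curves --- would fail: an element of the coend is a single equivalence class living over a single component $W$, and an arbitrary subset $R$ of the target curve-set contains curves admitting no common small slice through which they all pass at appropriate parameter values, so no single $W$ of the kind you describe can witness $R$. The missing idea (and what the paper does) is that no geometric choice is needed at all: instantiate the coend variable at the probe slice itself, $W:=Z$, and send $R\mapsto\bigl(R,\,R\cap\cat{C}[Z,Y_1]\cap\cat{C}[Z,Y_2]\bigr)$ --- a diagonal-type section, well-typed because every curve in the target set passes through $Z$. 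Composing with your cowedge map returns $R$ on the nose, so surjectivity is immediate; injectivity then amounts to exhibiting, for each $(S,T)$ over an arbitrary $W$, a chain of sliding relations connecting it to this diagonal representative (the paper's $(S,T)=(S\cap S,T\cap T)\sim(S\cap T,S\cap T)$-style manipulation). That chain of coend identifications, not a light-cone argument in the manifold, is where the remaining work concentrates; Lemma~\ref{lem:tinycurves} plays no role in this theorem.
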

\begin{proof}
  Proof given in appendix \ref{proof:kernel}.
\end{proof}

\section{Interpreting the operations in $\Slice$}\label{sec:logic}

We have shown that $\Slice$ admits two operations $\oor$ and $\oand$ taking a pair of spacelike slices to a ``generalised'' slice, i.e. a presheaf over slices. Here, we give (the beginnings of) a physical interpretation for these operations.

First, it is helpful to shift from thinking geometrically about slices to thinking logically about them. That is, we can think of a slice $X$ as a logical predicate, namely that a system satisfies a certain property at a certain moment in time. The simplest non-trivial example is 1+1 dimensional Minkowski space, where a particle in 1D space traces out a causal curve through $\mathbb R^{1+1}$.

As a simple case, we can consider slices of the form $X := \{t_1\} \times P$ for a time $t_1 \in \mathbb R$ and a closed subset $P \subseteq \mathbb R$.
We can now think of $P$ as saying something about the position of a particle at time $t_1$, e.g. ``the particle's position is $\geq x_1$''. Similarly, another slice $Y := \{t_2\} \times Q$, expresses that a certain property $Q$ holds for a particle at time $t_2$, e.g. ``the particle's position is $\leq x_2$''.

We can now think about whether it makes sense to take conjunctions or disjunctions of these kinds of predicates. If $t_1 = t_2$, then everything works out exactly as one would expect. Namely, $X \oand Y = \{t_1\} \times (P \cap Q)$, which captures the statement that at time $t_1$, ``the particles position is $\geq x_1$ AND it is $\leq x_2$''. Similarly, $X \oor Y = \{t_1\} \times (P \cup Q)$, capturing the OR if predicates $P$ and $Q$ at a fixed time $t_1$.

If we look at arbitrary pairs of jointly spacelike slices $X$ and $Y$, then much the same interpretation holds, but rather than separating the time and space coordinates in a fixed reference frame, we can regard $X$ and $Y$ as living on the same spacelike hypersurface.

The more interesting case is of course when $X$ and $Y$ are not jointly spacelike. While we can't make sense of $X \oand Y$ and $X \oor Y$ as spacelike slices themselves, we can make sense of them relative to a third, ``probe'' slice $Z$. If we restrict to the simpler case where $X = \{t_1\} \times P$ and $Y = \{t_2\} \times Q$, now with $t_1 \neq t_2$ and possibly some causal curves between $X$ and $Y$, then any $S \in (X \oand Y)(Z)$ is a set of causal curves that first passes through $Z$ then must satisfy $P$ at $t_1$ AND $Q$ at $t_2$. Hence, $\oand$ captures conjunction, but with predicates at different times. Similarly, $\oor$ captures this generalised kind of disjunction.

We can apply this kind of interpretation to arbitrary pairs of slices $X, Y$, not just those which take a product form in a fixed reference frame, however the meaning is slightly less intuitive in some cases, like when $X$ and $Y$ intersect and are furthermore not jointly spacelike. Nevertheless, we obtain a notion of conjunction and disjunction which is defined everywhere, and thanks to Theorem~\ref{thm:kernel}, distributes as one would expect. Hence, we have the beginnings of a logic for (generalised) spacetime slices. However there is much still to explore. For example, there is no clear ``universal'' notion of negation here, but one may be able to negate a slice relative to another one, e.g. some Cauchy surface containing the slice.

\section{Conclusion and Future Work}

We have shown that the category $\Slice$ of spacelike slices and causal curves admits two generalised tensor-like structures, corresponding to conjunction and disjunction. We see several avenues of future work. One is the complete characterisation of the structure $\oor$ defined in Section~\ref{sec:slice_union}, which combines elements of both a premonoidal and promonoidal product. As promonoidal and strict premonoidal categories can be formalised as pseudomonoids in a suitable monoidal bicategory, one might hope to do similar for the ``pre-promonoidal'' structure $\oor$.

As hinted at the end of the previous section, there seems to be much more left to say about the logical interpretation of connectives in $\Slice$. For instance, one could try to obtain an analogue to full classical logic by introducing a (suitably localised) negation. It also seems natural to study non-commutative connectives such as the ``sequence'' product $\triangleleft$ present in the logic BV~\cite{Guglielmi2007}, which was recently shown to capture the \textit{one-way signalling} processes in the $\textrm{Caus}[-]$ construction~\cite{SimmonsKissinger2022}.

Another direction is to investigate other places tensor-like structures appear, particularly within models that have some notion of ``causality'' which may be different from the usual relativistic one. For example, by imposing restrictions on the Petri nets of \cite{baez_petri}, one may force the monoidal category $\mathsf{FP}$ developed there to be only promonoidal. In such a case it seems that the fibres $\mathsf{FP}_i$ are no longer premonoidal but can be described by a pre-promonoidal category.

While the category $\Slice$ we defined here gives an interesting toy theory for exploring spacetime, causal curves, and associated notions of logic and compositionality, it is by no means the ``one true'' category of spacetime. It would be interesting to study variations on this structure, which may have different, possibly more natural notions of composition. For example, instead of intersecting sets of curves, one could define a category $\Slice'$ where composition is given by ``gluing'' curves together, somewhat in the same spirit as~\cite{gogioso_functorial}. Such a category seems more amenable to an alternative view of AQFT, as functors $\Slice'\morph{}\mathsf{Alg}_k$, or indeed with codomain taken to be any reasonable process theory.

Finally, one could compare our approach to other categorical models of causality and spacetime, such as the formulation using idempotent subobjects~\cite{moliner_thesis,moliner_space}, the order-theoretic formulation of~\cite{gogioso_functorial}, and the aforementioned $\textrm{Caus}[-]$ construction~\cite{Kissinger2019a,SimmonsKissinger2022}.

\subsubsection*{Acknowledgements}
JH is supported by University College London and the EPSRC [grant number EP/L015242/1].
AK is supported by Grant No. 61466 from the John Templeton Foundation as part of the QISS project. The opinions expressed in this publication are those of the authors and do not necessarily reflect the views of the John Templeton Foundation.
Many thanks to Matt Wilson, Cole Comfort, John van de Wetering, Bob Coecke, and Raymond Lal for useful recent (and not-so-recent!) discussions on categories of spacetime; and to Stefano Gogioso for noticing a few issues with an early draft of this work.

\bibliographystyle{eptcs}
\bibliography{bibliography}

\appendix

\section{Partially Monoidal Categories as Promonoidal Categories}\label{sec:partiallymonoidal}
In this appendix we compare the partially monoidal categories of \cite{coecke_causalcats,lal_causal,gogioso_church} to promonoidal categories.
We discuss a class of partially monoidal categories that can be equivalently described as promonoidal categories which are representable wherever the presheaves are non-empty and discuss when it is possible to derive a partially monoidal category from a promonoidal one.

\begin{defn}[Partial Functor \cite{gogioso_church}]
  A partial functor $\cat{C}\rightharpoonup\cat{D}$ is a span of functors $\cat{C}\lmorph{i} \cat{S} \morph{F}\cat{D}$ where $i$ is an opisofibration, embedding $S$ as a subcategory of $\cat{C}$ (so $i$ is full, faithful and $\cat{S}$ is a replete subcategory of $\cat{C}$).
  Composition of partial functors is by pullback.
  A morphism of partial functors $(\phi,\eta):(i,F)\morph{}(j,G)$ is a pair of a functor $\phi:\cat{S}\morph{}\cat{S'}$ between the apexes of the spans and a natural transformation $\eta:F\implies G\phi$,
  \begin{equation}\label{eq:pfunmorph}
    \begin{tikzcd}
      & \cat{S} \ar[d,"\phi"] \ar[ddl,"i"', bend right, ""{name=U, below, sloped}] \ar[ddr,"F", bend left, ""{name=V,below,sloped}] & \\
      & \cat{S'} \ar[dl,"j"] \ar[dr,"G"'] \ar[from=V,Rightarrow,"\eta"'{xshift=0.1cm,yshift=0cm}] & \\
      \cat{C} & & \cat{D}
    \end{tikzcd}
  \end{equation}
  Categories, partial functors and morphisms of partial functors form a monoidal bicategory $\pcat$ where the tensor is given pointwise by taking the product of categories and the product of the underlying functors in the spans.
  Note that full and faithful opisofibrations are closed under composition and stable under pullback.
\end{defn}

\begin{defn}[Partially Monoidal Category \cite{gogioso_church}]
  A category $\cat{C}$ is partially monoidal if it is equipped with:
  \begin{itemize}
    \item A partial tensor product functor $\boxtimes:\cat{C}\times\cat{C}\rightharpoonup\cat{C}$
    \item A unit object $I$
  \end{itemize}
  together with associativity and unit natural isomorphisms such that the triangle and pentagon equations hold.
\end{defn}

\begin{remark}
  A very concise definition of a partially monoidal category $\cat{C}$ is as a pseudomonoid in $\pcat$
\end{remark}

It may not be immediately apparent that there are connections between partially monoidal and promonoidal categories.
It turns out though that there is a case where the two coincide on-the-nose.

There exists a special class of partial functors where the left leg is not only an opisofibration but a proper discrete opfibration.
This makes the left leg a \textit{cosieve} which coincides with the definition of partial functor given by \cite{benabou}.
Demanding that the left leg is a cosieve ensures that the subcategory on which the tensor is defined is closed under post-composition with morphisms of $\cat{C}\times\cat{C}$.
This captures the following physical intuition: if $X\otimes Y$ exists and there is a morphism $X\morph{}X'$ then $X'\otimes Y$ exists too.
Thus we maintain the intuition that if one applies a local map to $X$ then the tensor product should still exist afterwards.
From a mathematical perspective, when the left leg of the tensor product partial functor is a cosieve, the partially monoidal category is equivalent to a promonoidal one.
Indeed, B\'enabou notes that there is an 1-1 correspondence (up to isomorphism) between partial functors with left leg a cosieve and profunctors which factorise through the representable and empty presheaves \cite{benabou}.
In this light the following proposition is not surprising but there is a little effort required in checking that everything works out:

\begin{prop}\label{prop:partpro}
  A partially monoidal category $(\cat{C},\boxtimes,J)$ whose left leg of the tensor product partial functor is a cosieve is a promonoidal category with representable unit and a tensor $\otimes(-,b,c)$ which is either representable or empty for each $(b,c)\in\cat{C}\times\cat{C}$.
\end{prop}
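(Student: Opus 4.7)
The plan is to transport the partially monoidal data along the Yoneda embedding, using the cosieve condition to ``extend by zero'' the partially-defined representable presheaves. Writing the partial tensor as the span $\cat{C}\times\cat{C}\xleftarrow{i}\cat{S}\xrightarrow{F}\cat{C}$, I would define the tensor profunctor by
\[
\otimes(a,b,c)\ :=\ \begin{cases} \cat{C}(a, F(b,c)) & \text{if } (b,c)\in\cat{S},\\ \varnothing & \text{otherwise,}\end{cases}
\]
and the unit profunctor by $I(-):=\cat{C}(-,J)$. The ``representable or empty'' clause in the proposition is then immediate from the definition. For functoriality in $(b,c)$, the cosieve hypothesis on $i$ does the essential work: if $(b,c)\in\cat{S}$ and $(f,g):(b,c)\to(b',c')$ is any morphism in $\cat{C}\times\cat{C}$, then $(b',c')\in\cat{S}$ too, so $F(f,g)$ exists and the induced map on presheaves is post-composition with $F(f,g)$; when $(b,c)\notin\cat{S}$ the source is empty and there is nothing to choose. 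Functoriality in $a$ is immediate since each slice $\otimes(-,b,c)$ is either a standard representable presheaf or the constant empty presheaf.

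Next I would construct the associator. By the ninja Yoneda lemma \eqref{eq:ninja}, the profunctor composite $\otimes(\otimes\times 1)$ evaluated at $(-,a,b,c)$ reduces to $\cat{C}\bigl(-,F(F(a,b),c)\bigr)$ exactly when $(a,b)\in\cat{S}$ and $(F(a,b),c)\in\cat{S}$, and to $\varnothing$ otherwise; dually $\otimes(1\times\otimes)$ reduces to $\cat{C}\bigl(-,F(a,F(b,c))\bigr)$ exactly when $(b,c)\in\cat{S}$ and $(a,F(b,c))\in\cat{S}$. The partially monoidal associator $\alpha$ is a natural iso of partial functors $\boxtimes(\boxtimes\times 1)\cong\boxtimes(1\times\boxtimes)$, which asserts precisely that these two non-emptiness loci in $\cat{C}\times\cat{C}\times\cat{C}$ coincide, and provides natural isomorphisms $F(F(a,b),c)\cong F(a,F(b,c))$ on this common locus. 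Post-composing with these under the Yoneda embedding yields the promonoidal associator. The unit isomorphisms $\rho,\lambda$ are obtained identically from the unit isos of the partially monoidal structure, using representability of $I$.

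Finally, the pentagon and triangle equations on the promonoidal side reduce under the fully faithful Yoneda embedding to their counterparts for the partially monoidal structure, which hold by assumption. The main obstacle is the bookkeeping involved in verifying that the coend reductions above genuinely describe the iterated partial composites: one must check that the non-empty loci emerging from the Yoneda computations match the pullback-defined domains of $\boxtimes(\boxtimes\times 1)$ and $\boxtimes(1\times\boxtimes)$, and this is where the cosieve hypothesis on the left leg is used delicately, ensuring that pullbacks along $F\times 1$ and $1\times F$ restrict back to cosieves so that the apex structure on each iterated partial functor is the replete full subcategory carved out by the non-emptiness condition. Once this matching is in place, the rest of the argument is formal transport along Yoneda.
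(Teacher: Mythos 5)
Your proposal is correct and follows essentially the same route as the paper: extend the partial tensor by the empty presheaf outside $\cat{S}$ (with the cosieve condition guaranteeing profunctoriality in the covariant arguments), take the unit to be $\yo_J$, and transport the associator, unitors, and coherence equations along Yoneda from the partially monoidal data, with the iterated composites computed by the ninja Yoneda lemma just as the paper reads them off from the pullback apexes. The only difference is that the paper also records the converse construction (from such a promonoidal category back to a partially monoidal one with cosieve left leg), which the stated proposition does not strictly require.
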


\begin{proof}
  Proof given in appendix \ref{proof:partpro}.
\end{proof}

There are many examples of partially monoidal categories which are not equivalent to promonoidal ones and vice-versa.
For instance, we require that the unit presheaf $J(-)$ of a promonoidal category is representable to have any hope that it is a partially monoidal category.

Conversely, one might hope (similarly to monoidal categories) that all partially monoidal categories could be turned into promonoidal ones.
In general this is not possible though as taking the representable presheaves at the defined points of the partial tensor is not enough to define a profunctor $\cat{C}\times\cat{C}\pmorph\cat{C}$.
Indeed, a promonoidal category still has a total tensor, just into the presheaf category,

It is possible though to derive partially monoidal structures from a promonoidal one with representable unit presheaf $J(-)$, by pulling back the promonoidal tensor along the Yoneda embedding whenever it is representable.
There is of course a canonical ``maximal'' such partially monoidal structure induced by defining it everywhere it is possible to do so, i.e. everywhere the promonoidal tensor is representable.

One may wonder if there are any further connections between partial functors and profunctors - is there a category that unites them?
This would allow us to place the two on equal footing and compare arbitrary partially monoidal and promonoidal categories.
The key to this unification is the following result:
\begin{thm}[\cite{benabou,loregian_fibration}]
  There is an equivalence of categories between profunctors $\cat{C}\pmorph\cat{D}$ and two-sided discrete fibrations $\mathsf{DFib}(\cat{C},\cat{D})$.
\end{thm}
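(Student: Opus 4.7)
The plan is to exhibit the equivalence via a two-sided Grothendieck / category-of-elements construction, showing that it assembles into an equivalence of categories with an inverse given by taking fibres.

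First, I would spell out both sides precisely. On the profunctor side, a 1-cell is a functor $P : \opcat{\cat{D}} \times \cat{C} \morph{} \set$, and a 2-cell is a natural transformation; these assemble into the hom-category $\Prof(\cat{C}, \cat{D})$. On the fibration side, a two-sided discrete fibration is a span $\cat{C} \xleftarrow{p} \cat{E} \xrightarrow{q} \cat{D}$ in which $p$ is a discrete opfibration, $q$ is a discrete fibration, and the two lifting structures commute in the sense that $q$ sends $p$-opcartesian arrows to identities and $p$ sends $q$-cartesian arrows to identities (equivalently, every arrow of $\cat{E}$ factors essentially uniquely into a $p$-opcartesian followed by a $q$-cartesian piece). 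A morphism is a functor between the apexes commuting strictly with $p$ and $q$; these form $\mathsf{DFib}(\cat{C}, \cat{D})$.

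Second, I would define a functor $\textrm{El} : \Prof(\cat{C}, \cat{D}) \morph{} \mathsf{DFib}(\cat{C}, \cat{D})$ by the usual elements construction. The objects of $\textrm{El}(P)$ are triples $(c, d, x)$ with $x \in P(d, c)$, and a morphism $(c, d, x) \morph{} (c', d', x')$ is a pair $(f : c \morph{} c', g : d' \morph{} d)$ such that $P(g, c')(x') = P(d, f)(x)$ in $P(d, c')$. The projections $p(c,d,x) = c$ and $q(c,d,x) = d$ give the span. A natural transformation $\alpha : P \Rightarrow Q$ induces the functor $(c, d, x) \mapsto (c, d, \alpha_{d,c}(x))$, which commutes strictly with $p$ and $q$ and preserves composition by naturality of $\alpha$.

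Third, I would check that $\textrm{El}(P)$ genuinely lies in $\mathsf{DFib}(\cat{C}, \cat{D})$. For $p$ a discrete opfibration, given $(c, d, x)$ and $f : c \morph{} c'$ in $\cat{C}$, the unique opcartesian lift is $(f, 1_d) : (c, d, x) \morph{} (c', d, P(d, f)(x))$; dually $q$ is a discrete fibration via lifts of the form $(1_c, g)$. Every morphism $(f, g)$ of $\textrm{El}(P)$ factors as $(1_c, g)$ after $(f, 1_d)$, giving the required compatibility.

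Fourth, I would define a quasi-inverse $\textrm{Fib} : \mathsf{DFib}(\cat{C}, \cat{D}) \morph{} \Prof(\cat{C}, \cat{D})$ sending a span $(\cat{E}, p, q)$ to the profunctor $(d, c) \mapsto \{e \in \textrm{ob}(\cat{E}) : p(e) = c,\ q(e) = d\}$, with functorial action in $c$ given by $p$-opcartesian lifts and in $d^{\textrm{op}}$ given by $q$-cartesian lifts; the compatibility condition makes these two actions commute, yielding a well-defined functor out of $\opcat{\cat{D}} \times \cat{C}$. The remaining work is to construct natural isomorphisms $P \cong \textrm{Fib}(\textrm{El}(P))$ (tautological) and $\cat{E} \cong \textrm{El}(\textrm{Fib}(\cat{E}))$, the latter by sending $e \in \cat{E}$ to $(p(e), q(e), e)$ and using the essentially unique opcartesian-then-cartesian factorisation to identify morphisms on both sides.

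The main obstacle is the last step: verifying that the factorisation condition built into the definition of two-sided discrete fibration is exactly what is needed to recover morphisms of $\cat{E}$ from pairs of elements in the fibres, and that this recovery is strictly functorial. Once that factorisation uniqueness is in hand, functoriality and naturality of $\textrm{El}$ and $\textrm{Fib}$ follow routinely, and the two composites being (iso)identities gives the claimed equivalence of categories.
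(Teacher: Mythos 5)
Your construction is correct and is essentially the one the paper has in mind: your $\textrm{El}(P)$ is exactly the paper's category $\mathsf{Sec}(P)$ of sections of the collage, and the paper itself does not prove the theorem but defers to the cited references, so you are supplying the inverse construction and the two natural isomorphisms that the paper omits. The only slip is the variance of $g$ in your definition of morphisms of $\textrm{El}(P)$: for $P(g,c')(x')$ with $x'\in P(d',c')$ to land in $P(d,c')$ you need $g: d\morph{}d'$ (not $g: d'\morph{}d$), and with that correction the lifts $(1_c,g)$ do exhibit $q$ as a discrete fibration as you claim.
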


A two-sided discrete fibration is a span of functors $\cat{C}\lmorph{F} \cat{E}\morph{G}\cat{D}$ where:
\begin{itemize}
  \item each $F(e)\morph{}c'$ in $\cat{C}$ has unique lift $f:e\morph{}e'$ in $\cat{E}$ such that $G(f)=1_{G(e)}$,
  \item each $d\morph{}G(e)$ in $\cat{D}$ has unique lift $g:e'\morph{}e$ in $\cat{E}$ such that $F(g)=1_{F(e)}$,
  \item for each $f:e\morph{}e'$ in $\cat{E}$, the codomain of the lift of $Ff$ equals the domain of the lift of $Gf$, and their composite is $f$.
\end{itemize}

The two-sided discrete fibration corresponding to a profunctor $P:\cat{C}\pmorph\cat{D}$ is given by the projections out of the category $\mathsf{Sec}(P)$ of sections of the collage of $P$.
The objects of $\mathsf{Sec}(P)$ are the elements of the sets $P(d,c)$ for all $c$ and $d$.
A morphism $x\in P(d,c) \morph{} x'\in P(d',c')$ is given by a pair of arrows $f$ and $g$ such that $P(g,1)(x')=P(1,f)(x)$.

Consequently, each profunctor has a canonical span and by working in the category of spans of functors one can study the partial functors and profunctors side-by-side.
For instance, suppose $(\cat{C},\otimes,J)$ is a promonoidal category with $J(-)\cong\cat{C}(-,I)$.
There is a partial monoidal structure $(\boxtimes,I)$ on $\cat{C}$ given by pulling back $\otimes$ along the Yoneda embedding whenever it is representable - that is, whenever $\otimes(-,b,c)\cong\cat{C}(-,x_{bc})$ for some objects $b$ and $c$, we define $b\boxtimes c := x_{bc}$.
Write $\overline{\cat{C}\times\cat{C}}$ for the subcategory of $\cat{C}\times\cat{C}$ where the promonoidal tensor is representable.
Then there is a 2-cell in $\mathsf{Span}(\Cat)$ capturing the extension of the partially monoidal structure on $\cat{C}$ to the promonoidal structure:

\begin{equation*}
  \begin{tikzcd}
    & \overline{\cat{C}\times\cat{C}} \ar[d,"\phi"] \ar[ddl,"i"', bend right, ""{name=U, below, sloped}] \ar[ddr,"\boxtimes", bend left, ""{name=V,below,sloped}] & \\
    & \mathsf{Sec}(\otimes) \ar[dl,"p_1"] \ar[dr,"p_0"'] & \\
    \cat{C}\times\cat{C} & & \cat{C}
  \end{tikzcd}
\end{equation*}
where $\phi$ sends $(b,c)$ to $1_{b\boxtimes c,b\boxtimes c}\in \otimes(b\boxtimes c, b, c)$ and $(g,f)$ to $(g\boxtimes f, g, f)$.

\section{Proofs}

\subsection{Proof of Proposition \ref{prop:sliceprocopro}}\label{proof:sliceprocopro}
\begin{proof}
  The projections are given by
  \begin{align*}
    \pi_0 = \cat{C}[X]&:X\cup Y\morph{}X \\
    \pi_1 = \cat{C}[Y]&:X\cup Y\morph{}Y
  \end{align*}
  while the coprojections are given by
  \begin{align*}
    i_0 = \cat{C}[X]&:X\morph{}X\cup Y \\
    i_1 = \cat{C}[Y]&:Y\morph{}X\cup Y
  \end{align*}
  Given $f:Z\morph{}X$ and $f':Z\morph{}Y$, the universal arrow completing the product diagram is $\langle f,f'\rangle =f\cup f':Z\morph{}X\cup Y$, and given $g:X\morph{}Z$ and $g':Y\morph{}Z$, the universal arrow completing the coproduct diagram is $[g,g'] = g\cup g':X\cup Y\morph{} Z$.
  Indeed, it follows that the diagrams commute because $X$ and $Y$ are jointly spacelike with $X\cap Y=\varnothing$ and thus $f\cap\cat{C}[Y] = f'\cap\cat{C}[X] = g\cap\cat{C}[Y]= g'\cap\cat{C}[X] = \varnothing$.
\end{proof}

\subsection{Proof of Lemma \ref{prop:oand_nat}}\label{proof:oand_nat}
\begin{proof}
  Note that the following diagram commutes for any $U:Z'\morph{}Z$
  \begin{equation*}
    \begin{tikzcd}
      (X\oand Y)(Z) \ar[r, "(X\oand Y)(U)"] \ar[d, "(S\oand Y)_Z"'] & (X\oand Y)(Z') \ar[d, "(S\oand Y)_{Z'}"] \\
      (X'\oand Y)(Z) \ar[r, "(X'\oand Y)(U)"'] & (X'\oand Y)(Z')
    \end{tikzcd}
  \end{equation*}
  because on the top path we see $C\mapsto C\cap U \mapsto (C\cap U)\cap S$ while on the bottom path $C\mapsto C\cap S \mapsto (C\cap S)\cap U$.
  Naturality of $(X\oand T)$ follows similarly and checking the commutativity condition is straightforward.
\end{proof}

\subsection{Proof of Lemma \ref{prop:oand_functor}}\label{proof:oand_functor}
\begin{proof}
  Firstly note that each component of $1_X\oand 1_Y : (X\oand Y)(-) \implies (X\oand Y)(-)$ is just the identity.
  Thus it is the identity natural transformation and we conclude $1_X\oand 1_Y = 1_{(X\oand Y)(-)}$.

  Now take $S:X\morph{}X'$ and $S':X'\morph{}X''$.
  The arrow $(S'\oand Y)_Z \circ (S\oand Y)_Z$ acts as $C\mapsto (C\cap S)\cap S'$
  while the arrow $((S'\circ S) \oand Y)_Z$ acts as $C\mapsto C\cap (S'\cap S)$.
  Thus the components of the composite natural transformation $(S'\oand Y) \circ (S\oand Y)$ equal those of $((S'\circ S) \oand Y)$.

  A similar argument holds for arrows $T:Y\morph{}Y'$ and because $(S \oand Y)$ and $(X\oand T)$ commute we are done.
\end{proof}

\subsection{Proof of Theorem \ref{prop:sliceispro}}\label{proof:sliceispro}
\begin{proof}
  Let us begin with associativity $\oand(\oand \times 1) \cong \oand (1\times \oand)$.
  Note that by Yoneda we have
  \begin{align*}
    \oand (\oand\times 1)(W,X,Y,Z) & = \int^{A,B} \oand(W,A,B) \times \oand(A,X,Y) \times \Slice(B,Z) \\
    & \cong \int^{A} \oand(W,A,Z) \times \oand(A,X,Y) \label{eq:coendboi}
  \end{align*}
  While
  \begin{equation*}
    \oand (1\times \oand)(W,X,Y,Z) \cong \int^{A} \oand(W,X,A) \times \oand(A,Y,Z)
  \end{equation*}

  Let us show there is a canonical identification $\oand (\oand\times 1)(W,X,Y,Z) \cong \mathcal{P}(\cat{C}[W,X]\cap\cat{C}[W,Y]\cap\cat{C}[W,Z])=: \Lambda$.
  There are functions
  \begin{equation*}
    \oand(W,A,Z) \times \oand(A,X,Y) \morph{} \Lambda :: (S,T)\mapsto S\cap T
  \end{equation*}
  which form a cowedge with apex $\Lambda$.
  By the universal property of the coend this induces a unique function $g:\int^A\oand(W,A,Z) \times \oand(A,X,Y) \morph{} \Lambda$ making the obvious cowedge diagrams commute.

  We can also construct a function $f$ by composing
  \begin{equation*}
    \Lambda \morph{f'} \oand(W,W,Z) \times \oand(W,X,Y) \morph{\text{copr}_W} \int^A\oand(W,A,Z) \times \oand(A,X,Y)
  \end{equation*}
  where $f'$ acts as $S\mapsto (S,S)$.

  The universal property of the coend implies that the composition $fg = 1$, or we can check explicitly:
  \begin{equation*}
    (S,T)\mapsto S\cap T \mapsto (S\cap T,S\cap T)
  \end{equation*}
  upon which we simply need to note that we have $(S,T)=(S\cap S,T\cap T) \sim (S\cap T,S\cap T)$.

  Similarly, it is straightforward to show that $gf=1$: $S\mapsto (S,S)\mapsto S\cap S=S$.
  Thus $\Lambda \cong \int^A\oand(W,A,Z) \times \oand(A,X,Y)$ as sets.

  Now note that this isomorphism is in fact natural in $W,X,Y$ and $Z$.
  Let $w:W'\morph{}W, x:X\morph{}X', y:Y\morph{}Y', z:Z\morph{}Z'$, then we have
  \begin{equation*}
    \begin{tikzcd}
      (S,T) \ar[r,mapsto] \ar[d,"g_{WXYZ}"',mapsto] & (S\cap w\cap z,T\cap x\cap y) \ar[d,"g_{W'X'Y'Z'}",mapsto] \\
      S\cap T \ar[r,mapsto] & S\cap T\cap w\cap x\cap y \cap z
    \end{tikzcd}
  \end{equation*}
  Thus exhibiting the desired natural isomorphism.

  A similar argument shows that $\oand (1\times \oand)(W,X,Y,Z) \cong \Lambda$, and thus we have established the associativity natural isomorphism.

  The pentagon equation is given by (writing $i$ for the interchange and ignoring the associativity isomorphisms of profunctor composition):
  \begin{equation*}
    \begin{tikzcd}
      & \oand^a_{xe} \oand^x_{yd} \oand^y_{bc} \ar[rr,"1\circ\alpha"] \ar[ld,"\alpha\circ1"'] & & \oand^a_{xe} \oand^x_{by} \oand^y_{cd} \ar[rd,"\alpha\circ1"] & \\
      \oand^a_{yx} \oand^x_{de} \oand^y_{bc} \ar[rrd,"{(\alpha\circ1)i}"'] & & & &\oand^a_{bx} \oand^x_{ye} \oand^y_{cd} \\
      & & \oand^a_{bx} \oand^x_{cy} \oand^y_{de} \ar[rru,leftarrow,"1\circ\alpha"'] & &
    \end{tikzcd}
  \end{equation*}
  Clockwise we have the following mapping:
  \begin{equation*}
    (S,T,V) \mapsto (S,T\cap V,T\cap V) \mapsto (S\cap T\cap V,S\cap T\cap V, T\cap V) \mapsto (S\cap T\cap V,S\cap T\cap V, S\cap T\cap V)
  \end{equation*}
  while anticlockwise we have
  \begin{equation*}
    (S,T,V) \mapsto (S\cap T,S\cap T, V) \mapsto (S\cap T\cap V,S\cap T, S\cap T\cap V)
  \end{equation*}
  and it clear that $(S\cap T\cap V,S\cap T, S\cap T\cap V)\sim (S\cap T\cap V,S\cap T\cap V, S\cap T\cap V)$ under the coend equivalence relation.
  Thus the pentagon commutes.

  Now we show the existence of the unit isomorphisms $\oand (I\times 1) \cong 1 \cong \oand (1\times I)$.

  Much of the construction is similar to the previous argument, so we leave the reader to fill in some of the details.
  There exist functions $\oand(-,=,B)\times\mathcal{P}(\cat{C}[B,B]) \morph{} \Slice(-,=)$ for each $B$ given by sending $(S,T)\mapsto S\cap T$.
  These functions form a cowedge and therefore induce a unique function $\int^B \oand(-,=,B)\times\mathcal{P}(\cat{C}[B,B]) \morph{} \Slice(-,=)$.

  The inverse of this function is given by the function $S\mapsto (S,S)$ which factorises via $\text{copr}$.
  It is straightforward to check that these give the left unit natural isomorphism, and the construction of the right unit is similar.

  Writing $\yo$ for an application of the Yoneda lemma, the triangle equation is given by
  \begin{equation*}
    \begin{tikzcd}
      & \oand^a_{bc} & \\
      \oand^a_{xc}\oand^x_{by}I^y \ar[rr,"\alpha\circ 1"'] \ar[ur,"\yo\rho"] & & \oand^a_{bx}I^y\oand^x_{yc} \ar[ul,"\yo\lambda"']
    \end{tikzcd}
  \end{equation*}
  and it is little work to check that this commutes.

  The symmetry $(X\oand Y)(Z)\morph{}(Y\oand X)(Z)$ is given by the identity map for all $X,Y$ and $Z$.
\end{proof}

\subsection{Proof of Lemma \ref{prop:oor_functor}}\label{proof:oor_functor}
\begin{proof}
  Take $S:X\morph{}X'$ and $S':X'\morph{}X''$.
  Then $(S'\oor Y)_Z(S\oor Y)_Z$ acts as $C\mapsto C\cap(S\cup\cat{C}[Y])\cap(S'\cup\cat{C}[Y]) = C\cap((S\cap S')\cup \cat{C}[Y])$ which is precisely the same as the action of $(S'S\oor Y)_Z$.
  We conclude $(S'\oor Y)_Z(S\oor Y)_Z = (S'S\oor Y)_Z$.

  A similar argument shows that $(X\oor T')_Z(X\oor T)_Z = (X \oor T'T)_Z$ and thus we have functoriality of $(\mathord{-} \oor \mathord{=})$ in each component.
  This is enough to extend to functoriality from the funny tensor.
\end{proof}

\subsection{Proof of Theorem \ref{thm:kernel}}\label{proof:kernel}
\begin{proof}(Sketch).
  The proof is similar and uses the same methods as Theorem \ref{prop:sliceispro} so we only sketch the idea.

  Fix a slice $A$.
  We will show that $(A\oor -)(-)$ is a kernel.

  Starting with the units we need to show that $\int^X \oor^Z_{AX} J^X \cong J^Z$.
  There are functions $\oor^Z_{AX} J^X \morph{} J^Z$ sending $(S,T)\mapsto S\cap(T\cup\cat{C}[A])$.
  These are dinatural in $X$ and thus form a cowedge factorising uniquely via the coend.
  As a result we have a function $\int^X \oor^Z_{AX} J^X \morph{} J^Z$.
  This function is an isomorphism with inverse given by $S\mapsto (S,S)$ which factorises via $\text{copr}$.
  Indeed,
  \begin{equation*}
    S \mapsto (S,S) \mapsto S\cap (S\cup\cat{C}[A]) = S
  \end{equation*}
  and
  \begin{align*}
    (S,T)\mapsto S\cap(T\cup \cat{C}[A]) & \mapsto (S\cap(T\cup\cat{C}[A]), S\cap(T\cup\cat{C}[A]))\\
    & \sim (S\cap(S\cup\cat{C}[A]), T\cap T)\\
    & = (S,T)
  \end{align*}

  As for the multiplications we want to show $\int^Z \oor^W_{AZ}\oand^Z_{XY} \cong \int^{ZZ'} \oor^Z_{AX}\oor^{Z'}_{AY}\oand^W_{ZZ'}$ which it is easiest to do by showing each is naturally isomorphic to $\Lambda:= \mathcal{P}(\cat{C}[W,A]\cup(\cat{C}[W,X]\cap\cat{C}[W,Y]))$.
  For the former, there is a cowedge with components $(S,T)\mapsto S\cap(T\cup\cat{C}[A])$, with the inverse to the induced map given by $S\mapsto (S,S)$, as in the case of the units.
  For the latter, there is a cowedge with components $(S,T,V)\mapsto S\cap T\cap V$, with the inverse to the induced map given by $S\mapsto (S,S,S)$.

  To show that all the isomorphisms are natural is little work.
\end{proof}

\subsection{Proof of Proposition \ref{prop:partpro}}\label{proof:partpro}
\begin{proof}
  In a slight abuse of notation write $\cat{C}\times\cat{C} \lmorph{i}\cat{S}\morph{\boxtimes}\cat{C}$ for the underlying span of the partial functor $\boxtimes:\cat{C}\times\cat{C}\rightharpoonup\cat{C}$, and note that $J:1\rightharpoonup\cat{C}$ is simply a normal functor $J:1\morph{}\cat{C}$, in other words an object $J$ of $\cat{C}$.
  Just like for monoidal categories we can define a promonoidal structure on $\cat{C}$ by taking $(X\otimes Y)(Z) := \cat{C}(Z,X\boxtimes Y)$ whenever $(X,Y)\in \cat{S}$ and $(X\otimes Y)(Z) := \varnothing$ otherwise.
  The unit is the representable presheaf at $J$, $\yo_J$.

  The associativity isomorphism of a partially monoidal category induces the following arrows:
  \begin{equation}\label{cd:pcatass}
    \begin{tikzcd}
      & (\cat{S} \times \cat{C}) \times_{\cat{C}\times\cat{C}} \cat{S} \ar[ddl,"(i\times 1)\pi_0"', bend right] \ar[ddr,"\boxtimes\pi_1", bend left, ""{name=U, below, sloped}] \ar[d,"\phi"]& \\
      & (\cat{C} \times \cat{S}) \times_{\cat{C}\times\cat{C}} \cat{S} \ar[dl,"(1\times i)\pi_0"] \ar[dr,"\boxtimes\pi_1"', ""{name=V, below, sloped}] \ar[from=U, "\alpha"'{xshift=0.2cm,yshift=0cm}, Rightarrow] & \\
      \cat{C} \times \cat{C} \times \cat{C} & & \cat{C}
    \end{tikzcd}
  \end{equation}
  where $\pi_0$ and $\pi_1$ are the canonical projections from the pullback and $\alpha$ is a natural isomorphism.

  Given a cospan of functors $\cat{C}\morph{F}\cat{E}\lmorph{G}\cat{D}$, the pullback $\cat{C}\times_\cat{E}\cat{D}$ is the category consisting of pairs of objects $(c,d)$ with $Fc=Gd$ and pairs of morphisms $(f,g)$ with $Ff=Gg$.
  We can think of $(\cat{S} \times \cat{C}) \times_{\cat{C}\times\cat{C}} \cat{S}$ as the category with objects $(\, ((a,b),c),(a\boxtimes b, c)\, )$ where $(a,b)\in\cat{S}$ and $c\in\cat{C}$ with $(a\boxtimes b,c)\in\cat{S}$, while $(\cat{C} \times \cat{S}) \times_{\cat{C}\times\cat{C}} \cat{S}$ has objects $(\, (a,(b,c)), (a,b\boxtimes c)\, )$ where $(b,c)\in \cat{S}$ and $a\in\cat{C}$ with $(a,b\boxtimes c)\in\cat{S}$.
  The left triangle of \eqref{cd:pcatass} ensures that $\phi$ must act to send $(\, ((a,b),c),(a\boxtimes b, c)\, )\mapsto (\, (a,(b,c)), (a,b\boxtimes c)\, )$.
  The right triangle of \eqref{cd:pcatass} then implies that the components of $\alpha$ have type $\alpha_{a,b,c}:(a\boxtimes b)\boxtimes c \morph{} a\boxtimes (b\boxtimes c)$.
  This induces the necessary isomorphism $\otimes^a_{xd}\otimes^x_{bc}\morph{} \otimes^a_{bx}\otimes^x_{cd}$ and checking the pentagon coherence equation now follows the same standard proof as Theorem \ref{prop:monoidal_promonoidal}.

  The right unit isomorphism induces the following arrows:
  \begin{equation*}
    \begin{tikzcd}
      & (\cat{C} \times 1) \times_{\cat{C}\times\cat{C}} \cat{S} \ar[dl,"\pi_0"', bend right] \ar[ddr,"\boxtimes\pi_1", bend left, ""{name=U, below, sloped}] \ar[d,"\psi"]& \\
      \cat{C} \times 1 \ar[d,"\sim"'] & \cat{C} \ar[dl,"1"] \ar[dr,"1"', ""{name=V, below, sloped}] \ar[from=U, "\rho", Rightarrow] & \\
      \cat{C} & & \cat{C}
    \end{tikzcd}
  \end{equation*}
  the components of $\rho$ have type $\rho_a:a\boxtimes J\morph{} a$ as expected.
  A similar diagram is induced by $\lambda$ and in turn one sees that this has components $\lambda_a:J\boxtimes a\morph{} a$.
  Checking the triangle coherence equation follows like Theorem \ref{prop:monoidal_promonoidal}.

  Now suppose we begin with a promonoidal category $\cat{C}$ where the unit is representable $J(-)\cong \cat{C}(-,I)$ and for each $(b,c)\in\cat{C}\times\cat{C}$, either $\otimes(-,b,c)\cong \cat{C}(-,x_{bc})$ is representable, or $\otimes(-,b,c)\cong\varnothing(-)$ is empty.
  Define a full subcategory $\cat{S}$ of $\cat{C}\times\cat{C}$ spanned by objects $(b,c)$ where $\otimes(-,b,c)$ is representable.
  Suppose for a contradiction that $(b,c)\in\cat{S}$ and there exists a $(f,g):(b,c)\morph{}(b',c')$ in $\cat{C}\times\cat{C}$ but with $(b',c')\notin\cat{S}$.
  Then we would have a natural transformation $\cat{C}(-,x_{bc})\morph{}\varnothing(-)$, a contradiction.
  Thus $(f,g)$ cannot exist and as a result the canonical inclusion functor $\cat{S}\hookrightarrow\cat{C}\times\cat{C}$ is a discrete opfibration.
\end{proof}

\end{document}